\documentclass[11pt]{article}
\usepackage{amscd}
\usepackage{amsfonts}
\usepackage{amsmath}
\usepackage{amssymb}
\usepackage{amsthm}
\usepackage{bbm}
\usepackage{enumerate} 
\usepackage{fancyhdr}
\usepackage{graphicx}
\usepackage{hyperref}
\usepackage{indentfirst}
\usepackage{latexsym}
\usepackage{mathrsfs}
\usepackage[american]{babel}
\usepackage{microtype}

\def\a{\alpha}
\def\b{\beta}
\def\D{\Delta}
\def\d{\delta}

\def\e{\eta}
\def\g{\gamma}
\def\o{\otimes}
\def\tr{\triangleright}
\def\v{\varepsilon}
\def\End{{\rm End}}
\def\Hom{{\rm Hom}}

\allowdisplaybreaks[4]
\newtheorem{theorem}{Theorem}[section]
\newtheorem{lemma}[theorem]{Lemma}
\newtheorem{definition}[theorem]{Definition}
\newtheorem{proposition}[theorem]{Proposition}
\newtheorem{example}[theorem]{Example}

\newtheorem{corollary}[theorem]{Corollary}
\newtheorem{remark}[theorem]{Remark}

\usepackage[top=1in,bottom=1in,left=1.25in,right=1.25in]{geometry}
\textwidth 15cm \textheight 22cm \oddsidemargin 0in

\date{}
\begin{document}
\renewcommand{\baselinestretch}{1.2}
\renewcommand{\arraystretch}{1.0}
\title{\bf  Post-Hopf group algebras, Hopf group braces and Rota-Baxter operators on Hopf group algebras}

\author {{\bf Yan Ning, Xing Wang\footnote {Corresponding author: xwang17@126.com}, Daowei Lu}\\
{\small School of Mathematics and Big data, Jining University}\\
{\small Qufu, Shandong 273155, P. R. China}
}
 \maketitle

\begin{center}
\begin{minipage}{12.cm}

\noindent{\bf Abstract.} In this paper, we introduce the notions of Hopf group braces, post-Hopf group algebras and Rota-Baxter Hopf group algebras as important generalizations of Hopf brace, post Hopf algebra and Rota-Baxter Hopf algebras respectively. We also discuss their relationships. Explicitly under the condition of cocomutativity, Hopf group braces, post-Hopf group algebras could be mutually obtained, and Rota-Baxter Hopf group algebras could lead to Hopf group braces.
\\

\noindent{\bf Keywords:} Hopf group brace; Post-Hopf group algebra; Rota-Baxter Hopf group algebra.
\\

\noindent{\bf  Mathematics Subject Classification:}  16T05, 17B38.
\end{minipage}
\end{center}
\normalsize\vskip1cm

\section{Introduction and preliminaries}
\def\theequation{1.\arabic{equation}}
\setcounter{equation} {0}

To investigate the structure of set-theoretic solutions, Rump introduced braces for abelian groups in \cite{R} , yielding involutive non-degenerate solutions. This framework was later extended to non-abelian groups by Guarnieri and Vendram in \cite{Gua}, who introduced skew braces, providing non-degenerate set-theoretic solutions to the Yang-Baxter equation. Subsequent work by Gateva-Ivanova \cite{Ga} explored these solutions using braided groups and brace structures. More recently, Angiono, Galindo, and Vendramin \cite{AGV} introduced Hopf braces, a unifying generalization of both Rump’s braces and Guarnieri-Vendramin’s skew braces, demonstrating that every Hopf brace induces a solution to the Yang-Baxter equation.

In this paper, we will introduce the notions of Hopf group braces, post-Hopf group algebras and Rota-Baxter Hopf group algebras as important generalizations of Hopf brace, post Hopf algebra and Rota-Baxter Hopf algebras respectively and study their relationships. 

The paper is organized as follows. In Section 2, we firstly introduce the concept of Hopf group braces and study relative properties. we prove that in the case of cocommutative Hopf $\pi$-algebra with $\pi$ being abelian,  Hopf group braces and matched pair of Hopf group algeba are are in one-to-one correspondence. In section 3, we introduce the notion of post-Hopf group algebras and show that a cocommutative post-Hopf group algebra gives rise to a subadjacent Hopf  group algebra together with a module bialgebra structure on the unit-graded part. Then we show that there is a one to-one correspondence between cocommutative post-Hopf  group algebras and cocommutative Hopf group braces. In section 4, we introduce the notion of Rota-Baxter Hopf group algebra, and show that there exists a Hopf group brace structure on it.

Let $\pi$ be an abelian group with the unit element $e$. Recall from \cite{Wang} that a $\pi$-algebra is a family $H=\{H_\a\}_{\a\in\pi}$ of $\Bbbk$-spaces together with a family of $\Bbbk$-linear maps $m=\{m_{\a,\b}:H_\a\o H_\b\rightarrow H_{\a\b}\}_{\a,\b\in\pi}$  (called a multiplication) and a $\Bbbk$-linear map $\eta:k\rightarrow H_e$ (called a unit), such that $m$ is associative in the sense that, for all $\a,\b,\g\in\pi$,
\begin{align*}
&m_{\a,\b\g}(id_{H_\a}\o m_{\b,\g})=m_{\a\b,\g}(m_{\a,\b}\o id_{H_\g}),\\
&m_{\a,e}(id_{H_\a}\o \eta)=id_{H_\a}=m_{e,\a}(\eta\o id_{H_\a}).
\end{align*}

A Hopf $\pi$-algebra is a $\pi$-algebra $H=\{H_\a\}_{\a\in\pi}$ such that for all $\a,\b\in\pi$,
\begin{enumerate}[\quad\rm(1)]
\item each $H_\a$ is a coalgebra with comultiplication $\Delta_\a$ and counit $\v _\a$;
\item $\eta$ and $m_{\a,\b}$ are coalgebras maps;
\item there exists a family of $\Bbbk$-linear maps $S=\{S_\a:H_\a\rightarrow H_{\a^{-1}}\}_{\a\in\pi}$ satisfying 
  $$m_{\a^{-1},\a}(S_\a\o id_{H_{\a}})\Delta_\a=\v _\a1_e=m_{\a,\a^{-1}}(id_{H_{\a}}\o S_\a)\Delta_\a.$$
\end{enumerate}

Let $H$ be a Hopf $\pi$-algebra, then we have the following identities:
\begin{align*}
&S_\b(b)S_\a(a)=S_{\a\b}(ab),\quad S_e(1_e)=1_e,\\
&\Delta_{\a^{-1}}S_\a=\sigma_{H_{\a^{-1}},H_{\a^{-1}}}(S_\a\o S_\a)\Delta_\a,\quad \v _{\a^{-1}}S_\a=\v _\a,
\end{align*}
for all $a\in H_\a,b\in H_\b$, where $\sigma$ denotes the flip map.

When $H$ be a cocommutative Hopf $\pi$-algebra, then we have the identity:
\begin{equation*}
  S_{\a^{-1}}S_\a=id_\a, \quad  \forall\a\in\pi.
\end{equation*}

Let $H$ and $K$ be two Hopf $\pi$-algebras. A morphism of Hopf $\pi$-algebras $f:H\rightarrow K$ is a family of linear maps $f_\a:H_\a\rightarrow K_\a$ satisfying for all $\a,\b\in\pi$
$$m_{\a,\b}(f_\a\o f_\b)=f_{\a\b}m_{\a,\b},\ \Delta_\a f_\a=(f_\a\o f_\a)\Delta_\a,\ S_{K,\a} f_\a=f_{\bar{\a}}S_{H,\a}.$$

Throughout this paper, we will use the Sweedler’s notation: for all $\a\in\pi$ and $h\in H_\a$,
$$\Delta_\a(h)=h_{(1,\a)}\o h_{(2,\a)}.$$
And for $\a\in\pi$, we will denote $\a^{-1}$ by $\bar{\a}$.

\section{Hopf group braces and matched pair of Hopf group algebras}
 \def\theequation{2.\arabic{equation}}
 \setcounter{equation} {0}

\subsection{Hopf group braces}
\begin{definition}
A {\bf Hopf $\pi$-brace} structure over a family of coalgebras $H=\{H_\a,\Delta_\a,\varepsilon_\a\}_{\a\in\pi}$ consisting of the following datum:
\begin{enumerate}[\quad\rm(1)]
\item a Hopf $\pi$-algebra structure $(H, \cdot, 1, S)$ ($(H,\cdot,S)$ or $H$ for short),
\item a Hopf $\pi$-algebra structure $(H, \circ, 1_{\circ}, T)$ ($(H,\circ,T)$ or $H_{\circ}$ for short),
\item the compatible condition
\begin{equation} \label{eq:Hbrace}
g\circ(h\ell)=(g_{(1,\a)}\circ h)S_{\a}(g_{(2,\a)})(g_{(3,\a)}\circ \ell),
\end{equation}
for all $g\in H_\a,h\in H_\b,\ell\in H_\g, ~\a,\b,\g\in\pi$.
\end{enumerate}
\end{definition}
It is denoted by $(H,\cdot,1, S, \circ, 1_{\circ}, T)$ or $(H,\cdot,\circ)$ for short.

\begin{remark}
\begin{enumerate}[\rm(1)]
  \item When the group $\pi=\{1\}$, we could recover the notion of Hopf braces.
  \item In any Hopf $\pi$-brace, $1_\circ=1$. Indeed, setting $g=h=1_\circ$ in \eqref{eq:Hbrace} one obtains $1_\circ \ell=\ell$ for all $\ell\in H_\g$. Similarly, $g=\ell=1_\circ$ yields $h1_\circ=h$ for all $h\in H_\b$.
  \item  $(H,\cdot,\circ)$ is called cocommutative if $H_\a$ is cocommutative for each $\a\in\pi$.
\end{enumerate}
\end{remark}

A homomorphism $f:(H,\cdot_H,\circ_H)\rightarrow(K,\cdot_K,\circ_K)$ of Hopf $\pi$-braces is a linear map such that $f:H\rightarrow K$ and $f:H_{\circ}\rightarrow K_{\circ}$ are homomorphisms of Hopf $\pi$-algebras. 

Fix a Hopf $\pi$-algebra $(H,\cdot,1,\Delta,\v,S)$, let $\mathbf{Br}(H)$ denote the full subcategory of the category of Hopf $\pi$-braces with objects $(H,\cdot,\circ)$, that is, all objects in $\mathbf{Br}(H)$ share the same Hopf algebra structure $(H,\cdot)$.

\begin{example}
Let $(H,\cdot,\circ,S,T)$ be a Hopf brace and $\pi=Aut(H)$ the group of automorphisms of Hopf brace $H$. For each $\a\in\pi$, $H_\a=H$ as a vector space. We denote the element in $H_\a$ by $h^\a=\a(h)$ for $h\in H$. Define
\begin{align*}
&\cdot_{\a,\b}:H_\a\o H_\b\rightarrow H_{\a\b},\ g^\a\o h^\b\mapsto (g \cdot h)^{\a\b},\\
&\circ_{\a,\b}:H_\a\o H_\b\rightarrow H_{\a\b},\ g^\a\o h^\b\mapsto (g \circ h)^{\a\b},\\
&1_\cdot=1_\circ=1,\quad \Delta_\a=\Delta,\quad \varepsilon_\a=\varepsilon,\\
&S_\a:H_\a\rightarrow H_{\bar{\a}},\ h^\a\mapsto S(h)^{\bar{\a}},\\
&T_\a:H_\a\rightarrow H_{\bar{\a}},\ h^\a\mapsto T(h)^{\bar{\a}}.
\end{align*}
Denote $\cdot=\{\cdot_{\a,\b}\}_{\a,\b\in\pi},\circ=\{\circ_{\a,\b}\}_{\a,\b\in\pi},S=\{S_\a\}_{\a\in\pi},T=\{T_\a\}_{\a\in\pi})$, then $(H,\cdot,S,\circ,T$ is a Hopf $\pi$-brace.
\end{example}

\begin{definition}
Let $H=\{H_\a\}_{\a\in\pi}$ be a Hopf $\pi$-algebra, and $K$ a Hopf algebra. Then $H$ is called a {\bf left $K$-$\pi$-module} with the action $\rightharpoondown=\{\rightharpoondown_\a: K\otimes H_\a \rightarrow H_\a\}_{\a\in\pi}$ 
if every $H_\a$ is a $H$-module. And $H$ is called a {\bf left $K$-$\pi$-module $\pi$-bialgebra} if the following conditions hold:
\begin{enumerate}[\quad\rm(1)]
  \item $k\rightharpoondown_{\a\b}(hg)=(k_{(1)}\rightharpoondown_{\a}h)(k_{(2)}\rightharpoondown_{\b}h)$,
  \item $k\rightharpoondown_{e}1_e=\v_K(k)1_e$,
  \item $(k\rightharpoondown_{\a}h)_{(1,\a)} \otimes (k\rightharpoondown_{\a}h)_{(2,\a)} = (k_{(1)}\rightharpoondown_{\a}h_{(1,\a)})\otimes(k_{(2)}\rightharpoondown_{\a}h_{(2,\a)})$,
  \item $\v_\a(k\rightharpoondown_\a h)=\v_K(k)\v_\a(h)$.
\end{enumerate}
for all $h\in H_\a, g\in H_\b, \a,\b\in\pi$, $k\in K$.
\end{definition}

\begin{example} \label{ex:pimod}
Let $H=\{H_\a\}_{\a\in\pi}$ be a cocommutative Hopf $\pi$-algebra, $K$ a cocommutative Hopf algebra, 
and $H$ a left $K$-module  $\pi$-bialgebra as above. 
Then the smash product $(H\sharp K=\{H_\a\sharp K\}_{\a\in\pi},\cdot,\circ)$ is a cocommutative Hopf $\pi$-brace with the following structures:
\begin{eqnarray*}
  (h\sharp k)\cdot_{\a\b}(h'\sharp k') &=& hh'\sharp kk', \\
  (h\sharp k)\circ_{\a\b}(h'\sharp k') &=& h(k_{(1)}\rightharpoondown_{\b}h')\sharp k_{(2)}k', \\
  S_{H_\a\sharp K}(h\sharp k) &=& S_\a(h)\sharp S_K(k), \\
  T_{H_\a\sharp K}(h\sharp k) &=& S_K(k_{(1)})\rightharpoondown_{\bar{\a}}S_\a(h)\sharp S_K(k_{(2)}), 
\end{eqnarray*}
for all $h\in H_\a, h'\in H_\b, \a,\b\in\pi$, $k,k'\in K$.
\end{example}

\begin{definition}\cite{Wang}
Let $K=\{K_\a\}_{\a\in\pi}$ a Hopf $\pi_1$-algebra. Then a vector space $M$ is called a  left $\pi$-$K$-modulelike object with the action $\rightharpoonup=\{\rightharpoonup_\a: K_\a \otimes M \rightarrow M\}_{\a\in\pi_1}$ if 
$$(k\ell)\rightharpoonup_{\a\b} m = k\rightharpoonup_\a(\ell\rightharpoonup_\b m)\ \hbox{and}\ 1_e \rightharpoonup_e m =m,$$ 
for all $m\in M$, $k\in K_\a,\ell\in K_\b, \a,\b\in\pi_1$. 

Now let $H$ be a Hopf $\pi_2$-algebra for group $\pi_2$ and each $H_\g$ is a left $\pi$-$K$-modulelike object, then $H$ is called a  left $\pi$-$K$-modulelike bialgebra if the following conditions hold:
\begin{enumerate}[\quad\rm(1)]
  \item $k\rightharpoonup_{\a}(hg)=(k_{(1,\a)}\rightharpoonup_\a h)(k_{(2,\a)}\rightharpoonup_\a g)$,
  \item $k\rightharpoonup_{\a}1_H=\v_\a(k)1_H$,
  \item $(k\rightharpoonup_\a h)_{(1,\g)} \otimes (k\rightharpoonup_\a h)_{(2,\g)} = (k_{(1,\a)}\rightharpoonup_\a h_{(1,\g)})\otimes(k_{(2,\a)}\rightharpoonup_\a h_{(2,\g)})$,
  \item $\v(k\rightharpoonup_\a h)=\v_{K,\a}(k)\v_{H,\g}(h)$,
\end{enumerate}
for all $h\in H_\g,g\in H_\d$, $k\in K_\a$.
\end{definition}

\begin{example} \label{ex:modlike}
Let $K$ be a cocommutative Hopf $\pi_1$-algebra and $H$ a cocommutative left $\pi$-$K$-modulelike Hopf $\pi_2$-algebra. Denote $\pi=\pi_1\bigoplus\pi_2$, then the smash product $(H\natural K=\{H_\g\sharp K_\a\}_{(\g,\a)\in\pi},\cdot,\circ)$ is a cocommutative Hopf $\pi$-brace with the following structures:
\begin{eqnarray*}
  (h\natural k)\cdot(h'\natural k') &=& hh'\natural kk', \\
  (h\natural k)\circ(h'\natural k') &=& h(k_{(1,\a)}\rightharpoonup_{\a}h')\natural k_{(2,\a)}k', \\
  S_{(\g,\a)}(h\natural k) &=& S_\g(h)\natural S_\a(k), \\
  T_{(\g,\a)}(h\natural k) &=& S_\a(k_{(1,\a)})\rightharpoonup_{\bar{\a}}S_\g(h)\natural S_\a(k_{(2,\a)}), 
\end{eqnarray*}
for all $h\in H_\g,h'\in H_\d$, $k\in K_\a,k'\in K_\b$.
\end{example}

\begin{lemma}	\label{lem:truco}
Let $(H,\cdot,\circ)$ be a Hopf $\pi$-brace. Then
\begin{equation}\label{eq:Hbrace2}
  S_{\a\b}(g_{(1,\a)}\circ h)g_{(2,\a)} = S_{\a}(g_{(1,\a)})(g_{(2,\a)}\circ S_{\b}(h))
\end{equation}
for all $g\in H_\a, h\in H_\b,~ \a,\b\in\pi$. 
\end{lemma}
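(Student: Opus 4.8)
The plan is to run a convolution-inverse argument in the algebra $\Hom(H_\a\o H_\b,H)$, where $H=\bigoplus_{\g\in\pi}H_\g$ is viewed as a $\pi$-graded algebra under $\cdot$ and $H_\a\o H_\b$ as a coalgebra, with convolution product $(f*f')(x)=f(x_{(1)})\cdot f'(x_{(2)})$ and unit $\eta\v$ (counit of $H_\a\o H_\b$ followed by the unit). The starting observation is that both sides of \eqref{eq:Hbrace2} are themselves convolution products of elementary maps. Writing $P(g\o h)=\v_\b(h)\,g$, $B(g\o h)=S_\a(g)\v_\b(h)$, $Y(g\o h)=g\circ S_\b(h)$ and $W(g\o h)=g\circ h$, one checks directly from the Sweedler form of $*$ that the right-hand side of \eqref{eq:Hbrace2} is exactly $(B*Y)(g\o h)$, while the left-hand side will be identified with $(W^{-1}*P)(g\o h)$ once $W^{-1}$ is understood.

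The key structural input is that the multiplication $\circ$ is a morphism of coalgebras, this being part of the axioms for the Hopf $\pi$-algebra $(H,\circ,T)$. Hence $W$ is a coalgebra map, so $\Delta_{\a\b}(g\circ h)=(g_{(1,\a)}\circ h_{(1,\b)})\o(g_{(2,\a)}\circ h_{(2,\b)})$. Combining this with the fact that $S_{\a\b}$ is the $\cdot$-antipode shows that $W$ is invertible in the convolution algebra, with inverse $W^{-1}(g\o h)=S_{\a\b}(g\circ h)$; indeed $W*W^{-1}=\eta\v=W^{-1}*W$ follows from the two antipode identities applied to $g\circ h$. With this in hand, a one-line Sweedler computation identifies the left-hand side of \eqref{eq:Hbrace2} as $(W^{-1}*P)(g\o h)$.

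It then remains to produce a single genuine identity linking these maps, and I would obtain it by evaluating $g\circ\left(h_{(1,\b)}S_\b(h_{(2,\b)})\right)$ in two ways. On one hand $h_{(1,\b)}S_\b(h_{(2,\b)})=\v_\b(h)1$ and $g\circ 1=g$ (using $1_\circ=1$ from Remark~(2)), giving $\v_\b(h)\,g=P(g\o h)$. On the other hand, applying the compatibility \eqref{eq:Hbrace} with the two arguments $h_{(1,\b)}\in H_\b$ and $S_\b(h_{(2,\b)})\in H_{\bar\b}$ produces $(g_{(1,\a)}\circ h_{(1,\b)})S_\a(g_{(2,\a)})(g_{(3,\a)}\circ S_\b(h_{(2,\b)}))$, which is precisely $(W*B*Y)(g\o h)$. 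Thus $W*B*Y=P$ as elements of $\Hom(H_\a\o H_\b,H)$. Convolution-multiplying on the left by $W^{-1}$, then using associativity and $W^{-1}*W=\eta\v$, yields $B*Y=W^{-1}*P$, which is exactly the equality of the right-hand and left-hand sides of \eqref{eq:Hbrace2}.

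The main obstacle is conceptual rather than computational: one has to recognize that both sides are convolution expressions and that the right auxiliary relation to invert is the one coming from $h_{(1,\b)}S_\b(h_{(2,\b)})=\v_\b(h)1$ fed into the compatibility condition. After that, the only care required is bookkeeping of the $\pi$-gradings, so that each convolution lands in the intended homogeneous component (for example $W$ has degree $\a\b$, $W^{-1}$ degree $\bar\a\bar\b$, and $W^{-1}*P$ degree $\bar\b$, matching both sides of \eqref{eq:Hbrace2}); it is the coalgebra-morphism property of $\circ$ that makes $W$ convolution-invertible and hence makes the whole scheme go through.
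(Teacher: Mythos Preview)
Your argument is correct and is essentially the paper's proof recast in convolution-algebra language: both hinge on expanding $g\circ(h_{(1,\b)}S_\b(h_{(2,\b)}))$ via \eqref{eq:Hbrace} and then cancelling $S_{\a\b}(g_{(1,\a)}\circ h_{(1,\b)})$ against $(g_{(2,\a)}\circ h_{(2,\b)})$ using the $\cdot$-antipode. The paper performs this cancellation by a direct Sweedler substitution, whereas you phrase it as left-multiplying the identity $W*B*Y=P$ by $W^{-1}$; unpacking your convolution step reproduces the paper's computation line for line.
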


\begin{proof}
Equation~\eqref{eq:Hbrace} implies that
\begin{equation*}	
\v_\b(h)g = g\circ (h_{(1,\b)}S_{\b}(h_{(2,\b)})) = (g_{(1,\a)}\circ h_{(1,\b)})S_{\a}(g_{(2,\a)})(g_{(3,\a)}\circ S_{\b}(h_{(2,\b)}))
\end{equation*}
holds for all $g\in H_\a, h\in H_\b,~ \a,\b\in\pi$, and hence,
\begin{align*}
S_{\a\b}(g_{(1,\a)}\circ h)g_{(2,\a)} &= S_{\a\b}(g_{(1,\a)}\circ h_{(1,\b)}\v_\b(h_{(2,\b)}))g_{(2,\a)} \\
&= S_{\a\b}(g_{(1,\a)}\circ h_{(1,\b)})\v_\b(h_{(2,\b)})g_{(2,\a)}\\
&=S_{\a\b}(g_{(1,\a)}\circ h_{(1,\b)})(g_{(2,\a)}\circ h_{(2,\b)})S_\a(g_{(3,\a)})(g_{(4,\a)}\circ S_\b(h_{(3,\b)}))\\
&=\v_{\a\b}(g_{(1,\a)}\circ h_{(1,\b)})S_\a(g_{(2,\a)})(g_{(3,\a)}\circ S_\b(h_{(2,\b)}))\\
&=S_\a(g_{(1,\a)})(g_{(2,\a)}\circ S_\b(h)).
\end{align*}
This completes the proof.
\end{proof}

\begin{proposition}  \label{pro:pi-bialg}
Let $(H,\cdot,\circ)$ be a Hopf $\pi$-brace, $H$ denote the Hopf $\pi$-algebra $(H, \cdot, S)$ and $H_{\circ}$ denote the Hopf $\pi$-algebra $(H,\circ,T)$.
\begin{enumerate}[\quad\rm(1)]
\item For all $g\in H_\a, h\in H_\b$, $H$ is a left $H_{\circ}$-$\pi$-module like $\pi$-algebra with
    \begin{equation*} \label{eq:hbm:up}
     g\rightharpoonup_\a h=S_\a(g_{(1,\a)})(g_{(2,\a)}\circ h).
    \end{equation*}
\item For all $g\in H_\a, h\in H_\b$,
    \begin{eqnarray*}
     &g\circ h=g_{(1,\a)}(g_{(2,\a)}\rightharpoonup_\a h),& \label{eq:hbm1}\\
     &gh=g_{(1,\a)}\circ(T_\a(g_{(2,\a)})\rightharpoonup_{\bar{\a}} h).& \label{eq:hbe1}
    \end{eqnarray*}
\item If $H$ is cocommutative, then $H$ is a left $H_{\circ}$-$\pi$-module like $\pi$-bialgebra and
\begin{equation*}
  S_\b(g\rightharpoonup_\a h)=g\rightharpoonup_{\a} S_\b(h), ~g\in H_\a, h\in H_\b.
\end{equation*}
\end{enumerate}
\end{proposition}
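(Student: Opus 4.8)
The plan is to establish the three parts in order, using throughout the compatibility condition \eqref{eq:Hbrace}, the fact that $\circ$ is a coalgebra map (so that $\Delta_{\a\b}(g\circ h)=(g_{(1,\a)}\circ h_{(1,\b)})\o(g_{(2,\a)}\circ h_{(2,\b)})$), the associativity of $\circ$, the antipode relations for $S$ and $T$ recorded in Section~1, and Lemma~\ref{lem:truco}.

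For part (1) I read ``left $H_\circ$-$\pi$-module like $\pi$-algebra'' as the module-like object axioms together with the algebra compatibility conditions (1) and (2) of the module-like bialgebra definition. The unit axiom $1\rightharpoonup_e h=h$ is immediate from $S_e(1)=1$ and $1=1_\circ$. The multiplicativity axiom $(g\circ f)\rightharpoonup_{\a\b}h=g\rightharpoonup_\a(f\rightharpoonup_\b h)$ is the main computation: expanding the right-hand side by the definition of $\rightharpoonup$ and applying \eqref{eq:Hbrace} to the inner $\circ$-product yields the string $S_\a(g_{(1,\a)})(g_{(2,\a)}\circ S_\b(f_{(1,\b)}))S_\a(g_{(3,\a)})(g_{(4,\a)}\circ(f_{(2,\b)}\circ h))$. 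I then rewrite its first two factors by Lemma~\ref{lem:truco} (read from right to left) as $S_{\a\b}(g_{(1,\a)}\circ f_{(1,\b)})g_{(2,\a)}$, cancel the pair $g_{(2,\a)}S_\a(g_{(3,\a)})=\v_\a(g_{(2,\a)})1$, and recognise the outcome $S_{\a\b}(g_{(1,\a)}\circ f_{(1,\b)})(g_{(2,\a)}\circ(f_{(2,\b)}\circ h))$ as the left-hand side, after expanding $(g\circ f)\rightharpoonup_{\a\b}h$ via the coalgebra-map property of $\circ$ and $\circ$-associativity. Condition (2) follows from $g_{(2,\a)}\circ 1=g_{(2,\a)}$ and $S_\a(g_{(1,\a)})g_{(2,\a)}=\v_\a(g)1$, and condition (1) by applying \eqref{eq:Hbrace} to $g_{(2,\a)}\circ(h\ell)$ and matching the four resulting terms with $(g_{(1,\a)}\rightharpoonup_\a h)(g_{(2,\a)}\rightharpoonup_\a\ell)$.

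For part (2), the identity $g\circ h=g_{(1,\a)}(g_{(2,\a)}\rightharpoonup_\a h)$ comes by substituting the definition of $\rightharpoonup$ and collapsing $g_{(1,\a)}S_\a(g_{(2,\a)})=\v_\a(g)1$. The second identity is cleanest as a consequence of the first together with the multiplicativity just proved: applying the first identity to $g_{(1,\a)}\circ(T_\a(g_{(2,\a)})\rightharpoonup_{\bar{\a}}h)$ and then the multiplicativity axiom turns the nested action into $(g_{(2,\a)}\circ T_\a(g_{(3,\a)}))\rightharpoonup_e h$; the $\circ$-antipode relation $g_{(2,\a)}\circ T_\a(g_{(3,\a)})=\v_\a(g_{(2,\a)})1$ together with $1\rightharpoonup_e h=h$ then collapses the expression to $gh$.

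For part (3), assume cocommutativity. The counit condition (4) needs no cocommutativity: it follows from $\v_{\bar{\a}}S_\a=\v_\a$ and the multiplicativity of $\v$ with respect to $\circ$. The comultiplicativity condition (3) is where cocommutativity enters: expanding $\Delta(g\rightharpoonup_\a h)$ using that $\Delta$ is a $\cdot$-algebra map, that $\Delta_{\bar{\a}}S_\a=\sigma(S_\a\o S_\a)\Delta_\a$, and that $\circ$ is a coalgebra map gives $S_\a(g_{(1,\a)})(g_{(3,\a)}\circ h_{(1,\b)})\o S_\a(g_{(2,\a)})(g_{(4,\a)}\circ h_{(2,\b)})$, and transposing the two middle legs of the four-fold coproduct of $g$---legitimate precisely because $H$ is cocommutative---rewrites this as $(g_{(1,\a)}\rightharpoonup_\a h_{(1,\b)})\o(g_{(2,\a)}\rightharpoonup_\a h_{(2,\b)})$. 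Finally, for $S_\b(g\rightharpoonup_\a h)=g\rightharpoonup_\a S_\b(h)$ I apply the anti-multiplicativity $S_{\a\b}(xy)=S_\b(y)S_\a(x)$ to $S_\b(S_\a(g_{(1,\a)})(g_{(2,\a)}\circ h))$, use $S_{\bar{\a}}S_\a=id$ to remove the double antipode, reorder $g_{(1,\a)}\o g_{(2,\a)}$ by cocommutativity, and invoke Lemma~\ref{lem:truco} to reach $S_\a(g_{(1,\a)})(g_{(2,\a)}\circ S_\b(h))=g\rightharpoonup_\a S_\b(h)$.

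The step I expect to be the main obstacle is the multiplicativity computation in part (1): keeping the four Sweedler legs of $g$ correctly paired with the antipode $S$ and applying Lemma~\ref{lem:truco} in exactly the form that produces a cancelling pair $g_{(2,\a)}S_\a(g_{(3,\a)})$. Once the action is shown to be multiplicative, part (2) and the remaining identities of part (3) are essentially formal, with cocommutativity entering only to permute legs of the iterated coproduct.
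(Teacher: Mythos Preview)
Your proposal is correct and follows essentially the same route as the paper: the multiplicativity in part~(1) is obtained by expanding with \eqref{eq:Hbrace}, applying Lemma~\ref{lem:truco} to produce the cancelling pair $g_{(2,\a)}S_\a(g_{(3,\a)})$, and recognizing the result via the coalgebra property of $\circ$; parts~(2) and~(3) are handled exactly as you outline, with the $S_\b$-identity in part~(3) proved via anti-multiplicativity of $S$, the relation $S_{\bar\a}S_\a=id$, cocommutativity, and Lemma~\ref{lem:truco}. The paper leaves the coalgebra compatibilities in part~(3) to the reader, so your sketch there is in fact slightly more explicit.
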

\begin{proof}
(1)For $g\in H_\a, h\in H_\b,\ell\in H_\g$, it is clear that, $1\rightharpoonup_\a h=S_e(1)(1\circ h)=h$ and 
$$g\rightharpoonup_\a 1=S_\a(g_{(1,\a)})(g_{(2,\a)}\circ 1)=S_\a(g_{(1,\a)})g_{(2,\a)}=\v_\a(g)1.$$

And
\begin{eqnarray*}
& & g \rightharpoonup_\a (h\rightharpoonup_\b\ell )\\
&=& S_\a(g_{(1,\a)}) (g_{(2,\a)}\circ (S_\b(h_{(1,\b)})(h_{(2,\b)}\circ \ell ))) \\
&\overset{(\ref{eq:Hbrace})}{=}& S_\a(g_{(1,\a)}) (g_{(2,\a)}\circ S_\b(h_{(1,\b)})) S_\a(g_{(3,\a)}) (g_{(4,\a)}\circ h_{(2,\b)}\circ \ell ) \\
&\overset{(\ref{eq:Hbrace2})}{=}& S_{\a\b}(g_{(1,\a)}\circ h_{(1,\b)}) g_{(2,\a)} S_\a(g_{(3,\a)}) (g_{(4,\a)}\circ h_{(2,\b)}\circ \ell ) \\
&=& S_{\a}(g_{(1,\a)}\circ h_{(1,\b)}) ((g_{(2,\a)}\circ h_{(2,\b)})\circ \ell ) \\
&=& (g\circ h)\rightharpoonup_{\a\b} \ell ,
\end{eqnarray*}
and
\begin{eqnarray*}
g\rightharpoonup_{\a}(hk) &=& S_\a(g_{(1,\a)}) (g_{(2,\a)}\circ(hk)) \\
 &=& S_\a(g_{(1,\a)}) (g_{(2,\a)}\circ h)S_\a(g_{(3,\a)}) (g_{(4,\a)}\circ k)\\
 &=& (g_{(1,\a)}\rightharpoonup_\a h)(g_{(2,\a)}\rightharpoonup_\a k).
\end{eqnarray*}
Therefore, $H$ is a left $H_{\circ}$-$\pi$-module $\pi$-algebra.

(2) For all $g\in H_\a, h\in H_\b$,
\begin{eqnarray*}
 g_{(1,\a)}(g_{(2,\a)}\rightharpoonup_\a h) 
 &=& g_{(1,\a)}S_\a(g_{(2,\a)})(g_{(3,\a)}\circ h) \\
 &=& \v_\a(g_{(1,\a)})(g_{(2,\a)}\circ h) = g\circ h,
\end{eqnarray*}
and
\begin{eqnarray*}
 g_{(1,\a)}\circ(T_\a(g_{(2,\a)})\rightharpoonup_{\bar{\a}} h) 
 &=& g_{(1,\a)}(g_{(2,\a)}\rightharpoonup_\a(T_\a(g_{(2,\a)})\rightharpoonup_{\a} h)) \\
 &=& g_{(1,\a)}((g_{(2,\a)}\circ T_\a(g_{(2,\a)}))\rightharpoonup_{e} h) \\
 &=& g_{(1,\a)}\v_\a(g_{(2,\a)})(1\rightharpoonup_{e} h) = gh.
\end{eqnarray*}

(3) If $H$ is cocommutative, it is straightforward to verify that $H$ is a left $H_{\circ}$-$\pi$-module like $\pi$-bialgebra.
For all $g\in H_\a, h\in H_\b$, we have
\begin{eqnarray*}
S_\b(g\rightharpoonup_\a h) &=& S_\b(S_\a(g_{(1,\a)})(g_{(2,\a)}\circ h)) \\
 &=& S_{\a\b}(g_{(1,\a)}\circ h)S_{\bar{\a}}S_\a(g_{(2,\a)})\\
 &=& S_{\a\b}(g_{(1,\a)}\circ h)g_{(2,\a)} \\
 &=& S_\a(g_{(1,\a)})(g_{(2,\a)}\circ S_\b(h)) \\
 &=& g\rightharpoonup_{\a} S_\b(h). 
\end{eqnarray*}
These finish the proof.
\end{proof}

\begin{proposition}
Let $\pi$ be an abelian group and $(H,\cdot,\circ)$ a cocommutative Hopf $\pi$-brace, then each $H_\a$ is a right $H_\circ$-$\pi$-module like coalgebra under the action
$$a\leftharpoonup_\b x=T_\b(a_{(1,\a)}\rightharpoonup_\a x_{(1,\b)})\circ a_{(2,\a)}\circ x_{(2,\b)},$$
for all $a\in H_\a,x\in H_\b$.
\end{proposition}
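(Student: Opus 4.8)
The plan is to verify, for each fixed $\a\in\pi$, the three requirements packaged in the notion of a right $H_\circ$-$\pi$-modulelike coalgebra: that each $\leftharpoonup_\b\colon H_\a\o H_\b\to H_\a$ is a morphism of coalgebras, that $a\leftharpoonup_e 1=a$, and the associativity law $(a\leftharpoonup_\b x)\leftharpoonup_\g y=a\leftharpoonup_{\b\g}(x\circ y)$ (the product on $H_\circ$ being $\circ$). Before anything else I would record that the abelianness of $\pi$ is exactly what makes the operation well graded: the element $T_\b(a_{(1,\a)}\rightharpoonup_\a x_{(1,\b)})\circ a_{(2,\a)}\circ x_{(2,\b)}$ sits in $H_{\bar{\b}\a\b}$, which equals $H_\a$ precisely because $\bar{\b}\a\b=\a$; thus $\leftharpoonup_\b$ really does return the $\a$-component.

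The coalgebra-morphism property is the routine half. I would observe that $\leftharpoonup_\b$ is the composite of $\Delta_\a\o\Delta_\b$, a flip, the action $\rightharpoonup_\a$, the antipode $T_\b$, and the two $\circ$-multiplications. Each factor is a coalgebra morphism: $\rightharpoonup_\a$ by the coalgebra compatibility of a modulelike bialgebra (Proposition~\ref{pro:pi-bialg}(3)); the $\circ$-multiplications because $H_\circ$ is a Hopf $\pi$-algebra; and $T_\b$ because cocommutativity upgrades the antipode from a coalgebra anti-morphism to a genuine coalgebra morphism. Hence $\leftharpoonup_\b$ is a coalgebra morphism, yielding simultaneously $\Delta_\a(a\leftharpoonup_\b x)=(a_{(1,\a)}\leftharpoonup_\b x_{(1,\b)})\o(a_{(2,\a)}\leftharpoonup_\b x_{(2,\b)})$ and, using $\v\circ T_\b=\v$, the counit identity $\v_\a(a\leftharpoonup_\b x)=\v_\a(a)\v_\b(x)$. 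The unit axiom is then immediate: since $g\rightharpoonup_\a 1=\v_\a(g)1$ and $T_e(1)=1$, one gets $a\leftharpoonup_e 1=\v_\a(a_{(1,\a)})\,1\circ a_{(2,\a)}\circ 1=a$.

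The heart of the matter is associativity. I would expand $(a\leftharpoonup_\b x)\leftharpoonup_\g y$ by first splitting $a\leftharpoonup_\b x$ through the comultiplication rule just proved, and then eliminating the inner $\leftharpoonup$: $\circ$-products acting through $\rightharpoonup$ are collapsed by the modulelike identity $(g\circ h)\rightharpoonup_{\a\b}m=g\rightharpoonup_\a(h\rightharpoonup_\b m)$, and mixed terms are spread out with the module-algebra law $g\rightharpoonup_\a(hk)=(g_{(1,\a)}\rightharpoonup_\a h)(g_{(2,\a)}\rightharpoonup_\a k)$. On the other side, in $a\leftharpoonup_{\b\g}(x\circ y)$ I would write $(x\circ y)_{(1)}\o(x\circ y)_{(2)}=(x_{(1,\b)}\circ y_{(1,\g)})\o(x_{(2,\b)}\circ y_{(2,\g)})$ and then turn the inner $\circ$ into a $\cdot$-product via $x_{(1,\b)}\circ y_{(1,\g)}=x_{(1,\b)}(x_{(2,\b)}\rightharpoonup_\b y_{(1,\g)})$ (Proposition~\ref{pro:pi-bialg}(2)), so that the module-algebra law again applies. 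Both sides are thus reduced to words in $\rightharpoonup$, the two products, and the antipodes, and the remaining antipode factors are telescoped by the $\circ$-antipode axioms $T(w_{(1)})\circ w_{(2)}=\v(w)1=w_{(1)}\circ T(w_{(2)})$. Cocommutativity is invoked freely to realign Sweedler legs.

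The step I expect to fight hardest with is the antipode bookkeeping. The definition of $\leftharpoonup$ intertwines the two antipodes $S$ and $T$ with the action $\rightharpoonup$, and a direct expansion throws up terms of the form ``$T$ applied to a $\cdot$-product'' (equivalently, $S$ applied to a $\circ$-product) that refuse to simplify on their own. My device for clearing them is Lemma~\ref{lem:truco}, namely $S_{\a\b}(g_{(1,\a)}\circ h)g_{(2,\a)}=S_\a(g_{(1,\a)})(g_{(2,\a)}\circ S_\b(h))$, together with its corollary $S_\b(g\rightharpoonup_\a h)=g\rightharpoonup_\a S_\b(h)$ from Proposition~\ref{pro:pi-bialg}(3) and the defining compatibility \eqref{eq:Hbrace}; these allow the $\cdot$-antipode to be commuted past both the $\circ$-product and the action, after which the $\circ$-antipode relations collapse the expression to the desired form. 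Concretely I would first isolate and prove an auxiliary identity rewriting the crossing term $(a\leftharpoonup_\b x)\rightharpoonup_\a y$ entirely in terms of $\rightharpoonup$ and $\circ$, since this is the term produced when the outer $\leftharpoonup$ is unfolded and it is the pivot on which the two sides of the associativity law are made to coincide.
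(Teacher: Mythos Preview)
Your proposal is correct and takes essentially the same approach as the paper: a direct verification of the coalgebra-morphism and associativity axioms using the identities of Proposition~\ref{pro:pi-bialg} together with the $\circ$-antipode relations. The paper's associativity computation works entirely on the left-hand side---after combining the two $T$'s via the anti-homomorphism property and nesting the actions, it applies $gh=g_{(1)}\circ(T(g_{(2)})\rightharpoonup h)$ backwards, then expands $\rightharpoonup$ through its defining formula $g\rightharpoonup h=S(g_{(1)})(g_{(2)}\circ h)$ and cancels with the $S$-antipode axiom---rather than expanding both sides or invoking Lemma~\ref{lem:truco} explicitly, but the substance is the same.
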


\begin{proof}
For all $a\in H_\a,x\in H_\b,y\in H_\g$,
\begin{align*}
&(a\leftharpoonup_\b x)\leftharpoonup_\g y\\
&=(T_\b(a_{(1,\a)}\rightharpoonup_\a x_{(1,\b)})\circ a_{(2,\a)}\circ x_{(2,\b)})\leftharpoonup_\g y\\
&=T_{\bar{\g}}((T_\b(a_{(1,\a)}\rightharpoonup_\a x_{(1,\b)})\circ a_{(2,\a)}\circ x_{(2,\b)})\rightharpoonup y_{(1,\g)})\circ T_\b(a_{(3,\a)}\rightharpoonup_\a x_{(3,\b)})\circ a_{(4,\a)}\circ x_{(4,\b)}\circ y_{(2,\g)}\\
&=T_{\b\g}((a_{(1,\a)}\rightharpoonup_\a x_{(1,\b)})\circ(T_\b(a_{(2,\a)}\rightharpoonup_\a x_{(2,\b)})\rightharpoonup_{\a\b}  ((a_{(3,\a)}\circ x_{(3,\b)})\rightharpoonup y_{(1,\g)})))\circ a_{(4,\a)}\circ x_{(4,\b)}\circ y_{(2,\g)}\\
&=T_{\b\g}((a_{(1,\a)}\rightharpoonup_\a x_{(1,\b)}) ((a_{(2,\a)}\circ x_{(2,\b)})\rightharpoonup_{\a\b} y_{(1,\g)})))\circ a_{(3,\a)}\circ x_{(3,\b)}\circ y_{(2,\g)}\\
&=T_{\b\g}(S_\a(a_{(1,\a)})(a_{(2,\a)}\circ x_{(1,\b)})S_{\a\b}(a_{(3,\a)}\circ x_{(2,\b)})(a_{(4,\a)}\circ x_{(3,\b)}\circ y_{(1,\g)}))\circ a_{(5,\a)}\circ x_{(4,\b)}\circ y_{(2,\g)}\\
&=T_{\b\g}(S_\a(a_{(1,\a)})(a_{(2,\a)}\circ x_{(1,\b)}\circ y_{(1,\g)}))\circ a_{(3,\a)}\circ x_{(2,\b)}\circ y_{(2,\g)}\\
&=T_{\b\g}(a_{(1,\a)}\rightharpoonup_\a(x_{(1,\b)}\circ y_{(1,\g)}))\circ a_{(2,\a)}\circ x_{(2,\b)}\circ y_{(2,\g)}\\
&=a\leftharpoonup_{\b\g} (x\circ y),
\end{align*}
and 
\begin{align*}
&\Delta_\a(a\leftharpoonup_\b x)=\Delta_\a(T_\b(a_{(1,\a)}\rightharpoonup_\a x_{(1,\b)})\circ a_{(2,\a)}\circ x_{(2,\b)})\\
&=T_\b(a_{(1,\a)}\rightharpoonup_\a x_{(1,\b)})\circ a_{(2,\a)}\circ x_{(2,\b)}\o T_\b(a_{(3,\a)}\rightharpoonup_\a x_{(3,\b)})\circ a_{(4,\a)}\circ x_{(4,\b)}\\
&=a_{(1,\a)}\leftharpoonup_\b x_{(1,\b)}\o a_{(2,\a)}\leftharpoonup_\b x_{(2,\b)}.
\end{align*}
The proof is completed.
\end{proof}

\begin{theorem}
Let $\pi$ be an abelian group and $(H,\cdot,\circ)$ a cocommutative Hopf $\pi$-brace. Define $c_{\a,\b}:H_\a\o H_\b\rightarrow H_\b\o H_\a$ by
$$c_{\a,\b}(x\o y)=x_{(1,\a)}\rightharpoonup_\a y_{(1,\b)}\o x_{(2,\a)}\leftharpoonup_\b y_{(2,\b)},\ \forall x\in H_\a,y\in H_\b,$$
and $\sigma_{\a,\b}:H_\a\o H_\b\rightarrow H_\b\o H_\a$ by
$$\sigma_{\a,\b}(x\o y)=y_{(1,\a)}\o S_\b(y_{(2,\b)})ay_{(3,\a)},$$
then $c=\{c_{\a,\b}\}$ and $\sigma=\{\sigma_{\a,\b}\}$ produce isomorphic representations of the braid group $B_n$ on $H_{\a_1}\o H_{\a_2}\o\cdots\o H_{\a_n}$ for $n\geq2$ and $\a_i\in\pi$.
\end{theorem}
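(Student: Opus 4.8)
Throughout, set $V=\bigoplus_{\a\in\pi}H_\a$ and regard $c$ and $\sigma$ as single operators on $V\otimes V$ assembled from the $c_{\a,\b}$ and $\sigma_{\a,\b}$; because $\pi$ is abelian the grading shift $H_\a\otimes H_\b\to H_\b\otimes H_\a$ is internal to $V\otimes V$, so the operators $\sigma_i=\mathrm{id}^{\otimes(i-1)}\otimes(-)\otimes\mathrm{id}^{\otimes(n-i-1)}$ are honest endomorphisms of $V^{\otimes n}$. A family then defines a $B_n$-representation precisely when each generator operator is invertible and the Yang--Baxter relation $c_1c_2c_1=c_2c_1c_2$ holds on $V^{\otimes 3}$ (the far-commutativity $\sigma_i\sigma_j=\sigma_j\sigma_i$ for $|i-j|\ge2$ is automatic, the factors being disjoint). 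The plan is \emph{not} to check the braid relation for $c$ directly, but to verify invertibility and the braid relation only for the conjugation braiding $\sigma$ and then to transport them to $c$ through one invertible operator $\Psi_n$, so that the braid relation for $c$ and the isomorphism of the two representations come out simultaneously.

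First I would treat $\sigma_{\a,\b}(x\otimes y)=y_{(1,\b)}\otimes S_\b(y_{(2,\b)})\,x\,y_{(3,\b)}$, the conjugation braiding of the Hopf $\pi$-algebra $(H,\cdot)$. Its invertibility and its Yang--Baxter relation are the standard cocommutative computation, using only the antipode axioms, cocommutativity, and that $m$ and $\v_\b$ are coalgebra maps; here $\pi$ abelian guarantees $\bar\b\a\b=\a$ so that all gradings match. Next I would introduce the invertible operator
\[
\phi_{\a,\b}(x\otimes y)=x_{(1,\a)}\otimes\big(T_\a(x_{(2,\a)})\rightharpoonup_{\bar\a}y\big),\qquad \phi_{\a,\b}^{-1}(x\otimes y)=x_{(1,\a)}\otimes\big(x_{(2,\a)}\rightharpoonup_\a y\big),
\]
and prove the two-strand identity $c_{\a,\b}\,\phi_{\a,\b}=\phi_{\b,\a}\,\sigma_{\a,\b}$, that is $c=\phi\sigma\phi^{-1}$ on $V\otimes V$. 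This is where Lemma~\ref{lem:truco}, the relation $gh=g_{(1,\a)}\circ(T_\a(g_{(2,\a)})\rightharpoonup_{\bar\a}h)$ of Proposition~\ref{pro:pi-bialg}(2), the module-like identity $(g\circ h)\rightharpoonup\ell=g\rightharpoonup(h\rightharpoonup\ell)$, and cocommutativity all enter. The crux of this step is the elementary consequence $x\circ(T_\a(x)\rightharpoonup y)=xy$, which lets one collapse the nested expressions on both sides.

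The heart of the proof is the global intertwiner. I would define $\Psi_n:V^{\otimes n}\to V^{\otimes n}$ by the staircase rule in which the $k$-th tensorand is acted on by the $\circ$-antipode of the $\cdot$-product of its predecessors; on group-like inputs
\[
\Psi_n(x^1\otimes\cdots\otimes x^n)=\bigotimes_{k=1}^{n}\big(T(x^1\cdots x^{k-1})\rightharpoonup x^k\big),
\]
the general case being the evident Sweedler-coproduct distribution of this formula (for $n=3$ one gets $x_{(1)}\otimes(T(x_{(2)})\rightharpoonup y_{(1)})\otimes(T(x_{(3)}y_{(2)})\rightharpoonup z)$). This $\Psi_n$ is triangular, hence invertible, and reduces to $\phi$ for $n=2$. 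I would then establish $c_i\,\Psi_n=\Psi_n\,\sigma_i$ for every $i$. The verification localizes: on strands $i,i+1$ it reduces to the two-strand identity of the previous paragraph, now with the automorphism $T(x^1\cdots x^{i-1})\rightharpoonup(-)$ acting as a harmless spectator, while on the downstream strands $k>i$ one uses that conjugation preserves the partial $\cdot$-products, $x^{i+1}\cdot\big((x^{i+1})^{-1}x^{i}x^{i+1}\big)=x^{i}x^{i+1}$, so the acting elements $T(x^1\cdots x^{k-1})$ are left unchanged by $\sigma_i$ (and $c_i$ fixes those strands). Granting this intertwining, the invertibility of each $\sigma_i$ and of $\Psi_n$ gives invertibility of $c_i=\Psi_n\sigma_i\Psi_n^{-1}$, and the braid relation for $\sigma$ transports verbatim, $c_1c_2c_1=\Psi_3\sigma_1\sigma_2\sigma_1\Psi_3^{-1}=\Psi_3\sigma_2\sigma_1\sigma_2\Psi_3^{-1}=c_2c_1c_2$; the same $\Psi_n$ is the sought isomorphism of $B_n$-representations.

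The hard part will be the global intertwining step: one must control the Sweedler bookkeeping of the iterated $T$ and $\rightharpoonup$ applied to $\cdot$-products of several tensorands, and verify carefully that the downstream acting elements are genuinely invariant under each $\sigma_i$, all while keeping the $\pi$-gradings consistent (this is exactly where abelianness of $\pi$ is indispensable, since the products $x^1\cdots x^{k-1}$ must land in a prescribed graded piece). By contrast, once the correct forms of $\phi$ and of $\Psi_n$ are guessed, both the two-strand identity $c\phi=\phi\sigma$ and the Yang--Baxter relation for the conjugation braiding $\sigma$ are routine cocommutative computations.
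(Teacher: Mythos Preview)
Your approach is essentially the same as the paper's: both construct an invertible intertwiner out of the module-like action $\rightharpoonup$ and use it to conjugate the conjugation braiding $\sigma$ into $c$. The paper defines its intertwiner $f_n$ recursively by $f_2(x\otimes y)=x_{(1)}\otimes(x_{(2)}\rightharpoonup y)$ and $f_n=g_n(\mathrm{id}\otimes f_{n-1})$ with $g_n$ the ``spread the first leg over the rest via $\rightharpoonup$'' map; your closed-form staircase $\Psi_n$ (acting by $T$ of the $\cdot$-product of predecessors) is exactly $f_n^{-1}$, as one checks using $gh=g_{(1)}\circ(T(g_{(2)})\rightharpoonup h)$ from Proposition~\ref{pro:pi-bialg}(2), so the two constructions coincide up to inversion and your plan is correct.
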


\begin{proof}
Define $f_n,g_n:H_{\a_1}\o H_{\a_2}\o\cdots\o H_{\a_n}\rightarrow H_{\a_1}\o H_{\a_2}\o\cdots\o H_{\a_n}$ as follows
$$g_n(a^{\a_1}\o a^{\a_2}\o\cdots\o a^{\a_n})=a^{\a_1}_{(1,\a_1)}\o a^{\a_1}_{(2,\a_1)}\rightharpoonup_{\a_1}a^{\a_2}\o\cdots\o a^{\a_1}_{(n,\a_1)}\rightharpoonup_{\a_1}a^{\a_n},$$
$f_2=f:H_\a\o H_\b\rightarrow H_\a\o H_\b$, $f(x\o y)=x_{(1,\a)}\o (x_{(2,\a)}\rightharpoonup_\a y)$ and recursively $f_n=g_n(id\o f_{n-1})$. Since $f$ is invertible with $f^{-1}(x\o y)=x_{(1,\a)}\o (T_\a(x_{(2,\a)})\rightharpoonup_{\bar{\a}}y)$ and the $g_n$ are invertible, it follows that the $g_n$ are invertible by induction.
\end{proof}

\begin{corollary}
then $c=\{c_{\a,\b}\}_{\a,\b\in\pi}$ is a coalgebra isomorphism and a solution of the braid equation.
\end{corollary}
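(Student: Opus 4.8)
The plan is to deduce both assertions from the preceding theorem, the only genuinely new point being the coalgebra-morphism property of each $c_{\a,\b}$. Recall that the braid equation for $c$ is exactly the braid relation $c_1c_2c_1=c_2c_1c_2$ among the standard generators of $B_3$; since the theorem asserts that $c=\{c_{\a,\b}\}$ produces a representation of $B_n$ for every $n\ge 2$, specializing to $n=3$ gives this relation on $H_\a\o H_\b\o H_\g$, which is precisely the statement that $c$ solves the braid equation. Likewise, specializing to $n=2$ and using that the representations afforded by $c$ and $\sigma$ are isomorphic (intertwined by the invertible map $f=f_2$ constructed in the theorem), together with the invertibility of $\sigma_{\a,\b}$ --- which holds because $\sigma_{\a,\b}$ is built from the antipode $S$ and, by cocommutativity, $S_{\bar{\a}}S_\a=\mathrm{id}$ so each $S_\a$ is bijective with inverse $S_{\bar{\a}}$ --- yields $c_{\a,\b}=f_2\,\sigma_{\a,\b}\,f_2^{-1}$ invertible. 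Thus each $c_{\a,\b}$ is bijective.

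It remains to check that $c_{\a,\b}$ is a morphism of coalgebras from $H_\a\o H_\b$ to $H_\b\o H_\a$. For the counit, I would combine condition (4) of the left module-like $\pi$-bialgebra structure on $(H,\rightharpoonup)$ from Proposition~\ref{pro:pi-bialg}(3) with the counit property of the right action $\leftharpoonup$ (part of its being a right module-like coalgebra, as in the preceding proposition), so that $\v(x_{(1,\a)}\rightharpoonup_\a y_{(1,\b)})\,\v(x_{(2,\a)}\leftharpoonup_\b y_{(2,\b)})=\v_\a(x)\v_\b(y)$. For comultiplicativity, I would substitute the two comultiplication rules already available --- $\Delta(x\rightharpoonup y)=x_{(1)}\rightharpoonup y_{(1)}\o x_{(2)}\rightharpoonup y_{(2)}$ (condition (3) of Proposition~\ref{pro:pi-bialg}(3)) and $\Delta_\a(a\leftharpoonup_\b x)=a_{(1,\a)}\leftharpoonup_\b x_{(1,\b)}\o a_{(2,\a)}\leftharpoonup_\b x_{(2,\b)}$ (the second computation of the preceding proposition) --- into $\Delta\bigl(c_{\a,\b}(x\o y)\bigr)$, and then apply cocommutativity of $H_\a$ and $H_\b$ to realign the Sweedler components of $x$ and $y$ across the two tensor slots, recovering $(c_{\a,\b}\o c_{\a,\b})$ composed with the comultiplication of the tensor-product coalgebra. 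A bijective coalgebra morphism is a coalgebra isomorphism, so this together with the bijectivity above gives the first assertion.

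The main obstacle is the comultiplicativity computation in the second paragraph: it is the one place where a real calculation is needed, since one must expand both module-coalgebra comultiplication rules simultaneously and then invoke cocommutativity to permute the four resulting Sweedler factors of $x$ and of $y$ into the order dictated by the comultiplication of $H_\b\o H_\a$. By contrast, the bijectivity of $c_{\a,\b}$ and the braid equation are formal consequences of the theorem, requiring no further work beyond reading off the $n=2$ and $n=3$ instances. Assembling the three facts --- coalgebra morphism, bijectivity, and the $B_3$-braid relation --- then completes the proof.
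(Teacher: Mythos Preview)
The paper states this Corollary without proof, treating it as an immediate consequence of the preceding theorem and proposition; your proposal supplies exactly the natural argument the paper omits. The deductions of the braid equation (from the $n=3$ case of the theorem) and of bijectivity (from the $n=2$ case together with the invertibility of $\sigma_{\a,\b}$ and of the intertwiner $f_2$) are correct, and your verification that each $c_{\a,\b}$ is a coalgebra map --- using the module-coalgebra comultiplication rules for $\rightharpoonup$ and $\leftharpoonup$ and then cocommutativity to reorder the Sweedler factors --- is precisely the computation needed.

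One small remark on the bijectivity step: since $c_{\a,\b}$ and $\sigma_{\a,\b}$ both go from $H_\a\o H_\b$ to $H_\b\o H_\a$, the intertwining relation provided by the theorem is really $c_{\a,\b}=f_2^{(\b,\a)}\,\sigma_{\a,\b}\,(f_2^{(\a,\b)})^{-1}$, with the two instances of $f_2$ living on different tensor products. This does not affect your conclusion, as both are invertible by the theorem, but the formula $c_{\a,\b}=f_2\sigma_{\a,\b}f_2^{-1}$ as written would not type-check.
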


\subsection{Matched pair of Hopf group algebras}

 \begin{definition}
Let $\pi_1$ and $\pi_2$ be two groups. A matched pair of Hopf $\pi$-algebras is a 4-tuple $(H, K,\leftharpoonup,\rightharpoonup)$, where $K$ is Hopf $\pi_1$-algebra and $H$ is Hopf $\pi_2$-algebra, $\rightharpoonup_{\a}:H_\a\o K_\g\rightarrow K_\g$ and $\leftharpoonup_{\g}:H_\a\o K_\g \rightarrow H_\a$ are linear maps such that $K_\b$ is a left $\pi_2$-$H$-modulelike coalgebra and $H_\a$ is a right $\pi_1$-$K$-modulelike coalgebra and the following compatibility
conditions hold:
\begin{align}
x\rightharpoonup_{\g\d}(ab)&=(x_{(1,\a)}\rightharpoonup_{\a}a_{(1,\g)})((x_{(2,\a)}\leftharpoonup_{\g}a_{(2,\g)})\rightharpoonup_\a b)),\label{mp1}\\
x\rightharpoonup_{\a}1_K&=\v_\a(x)1_K,\label{mp2}\\
(xy)\leftharpoonup_{\g}a&=(x\leftharpoonup_{\g}(y_{(1,\b)}\rightharpoonup_{\b}a_{(1,\g)}))(y_{(2,\b)}\leftharpoonup_{\g}a_{(2,\g)}),\label{mp3}\\
1_H\leftharpoonup_e a&=\v_\g(a)1_H,\label{mp4}\\
(x_{(1,\a)}\leftharpoonup_\g a_{(1,\g)})\o(x_{(2,\a)}\rightharpoonup_\a a_{(2,\g)})&=(x_{(2,\a)}\leftharpoonup_\g a_{(2,\g)})\o(x_{(1,\a)}\rightharpoonup_\a a_{(1,\g)}),\label{mp5}
\end{align}
for all $x\in H_\a,y\in H_\b,a\in K_\g,b\in K_\d,\g,\d\in\pi_1,\a,\b\in\pi_2$.
\end{definition}

\begin{theorem}
Let $(H, K,\leftharpoonup,\rightharpoonup)$ be a matched pair of Hopf group algebra and $\pi=\pi_1\bigoplus\pi_2$, then $K\o H=\{(K\o H)_{(\g,\a)}=K_\g\o H_\a\}_{(\g,\a)\in\pi}$ becomes a Hopf $\pi$-algebra with the following structures:
\begin{align*}
&(a\o x)(b\o y)=a(x_{(1,\a)}\rightharpoonup_\a b_{(1,\d)})\o(x_{(2,\a)}\leftharpoonup_\d b_{(2,\d)})y,\\
&(a\o x)_{(1,(\g,\a))}\o (a\o x)_{(2,(\g,\a))}=(a_{(1,\a)}\o x_{(1,\g)})\o (a_{(2,\a)}\o x_{(2,\g)}),\\
&\v_{(\g,\a)}(a\o x)=\v_{K,\g}(a) \v_{H,\a}(x),\\
&S_{(\g,\a)}(a\o x)=(S_{H,\a}(x_{(2,\a)})\rightharpoonup_{\bar{\a}}S_{K,\g}(a_{(2,\g)}))\o(S_{H,\a}(x_{(1,\a)})\leftharpoonup_{\bar{\g}}S_{K,\g}(a_{(1,\g)})),
\end{align*}
for all $x\in H_\a,y\in H_\b,a\in K_\g,b\in K_\d$.
\end{theorem}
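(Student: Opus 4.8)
The plan is to verify, one at a time, the four defining conditions of a Hopf $\pi$-algebra for the family $\{K_\g\o H_\a\}_{(\g,\a)\in\pi}$: that each graded piece is a coalgebra, that the proposed multiplication is associative with unit $1_K\o 1_H$, that this multiplication and unit are coalgebra morphisms, and finally that $S$ is an antipode. Before anything else I would record the grading bookkeeping: since $\rightharpoonup_\a$ preserves the $K$-degree and $\leftharpoonup_\g$ preserves the $H$-degree, the product of $a\o x\in K_\g\o H_\a$ and $b\o y\in K_\d\o H_\b$ lands in $K_{\g\d}\o H_{\a\b}$, consistent with the $\pi=\pi_1\oplus\pi_2$ grading; likewise $S_{(\g,\a)}$ sends $K_\g\o H_\a$ into $K_{\bar\g}\o H_{\bar\a}$. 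The coalgebra condition is then immediate, since each $K_\g\o H_\a$ carries the tensor-product coalgebra structure of two coalgebras.

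The main obstacle is associativity. I would expand $((a\o x)(b\o y))(c\o z)$ and $(a\o x)((b\o y)(c\o z))$ in Sweedler notation and reduce both to a common normal form. The $K$-component requires the module-algebra property of $\rightharpoonup$ together with the compatibility \eqref{mp1}, which converts the composite action on a product in $K$ into the twisted one involving $\leftharpoonup$; dually, the $H$-component requires the modulelike condition on $\leftharpoonup$ together with \eqref{mp3}. The delicate point is that the two actions are intertwined: applying \eqref{mp1} on the left factor produces $\leftharpoonup$-terms that must be matched against those generated by \eqref{mp3} on the right factor, and keeping the Sweedler indices and the $\pi$-degrees aligned throughout this reduction is the real cost. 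The unit axioms are comparatively routine: substituting $1_K\o 1_H$ into the product and invoking \eqref{mp2} and \eqref{mp4}, together with the normalizations $1_H\rightharpoonup a=a$ and $x\leftharpoonup 1_K=x$ coming from the modulelike axioms, collapses the formula to the identity.

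For the bialgebra compatibility I would show that $\D$ and $\v$ are algebra maps. Checking $\D((a\o x)(b\o y))=\D(a\o x)\D(b\o y)$ hinges on \eqref{mp5}, the exchange relation between the two actions, used together with the modulelike coalgebra conditions which assert that $\rightharpoonup$ and $\leftharpoonup$ commute with the comultiplications; the counit statement follows from \eqref{mp2}, \eqref{mp4} and the counit conditions of the two modulelike structures. Finally, for the antipode I would verify $m(S\o id)\D=\e\v=m(id\o S)\D$ on a generator $a\o x$. Expanding $S_{(\g,\a)}$ as given and using the antipode axioms of $H$ and $K$, the module-algebra properties, and the unitality and counitality of $\rightharpoonup$, $\leftharpoonup$, both composites should telescope to $\v_{K,\g}(a)\v_{H,\a}(x)\,(1_K\o 1_H)$; here the precise placement of $S_{H,\a}$, $S_{K,\g}$ and of the inverse degrees $\bar\a,\bar\g$ in the definition of $S$ is exactly what makes the cancellations go through.
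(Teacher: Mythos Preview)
Your plan is correct and follows essentially the same route as the paper: verify associativity by expanding both triple products and reducing with \eqref{mp1}, \eqref{mp3} and the modulelike axioms; check that $\Delta$ is multiplicative using the modulelike-coalgebra conditions together with the exchange relation \eqref{mp5}; and telescope the antipode identity using the antipodes of $H$ and $K$ and the unit/counit normalizations. The paper omits the coalgebra and unit verifications you mention (they are indeed routine) and carries out explicitly the three computations you outlined.
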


\begin{proof}
For all $x\in H_\a,y\in H_\b,z\in H_\mu,a\in K_\g,b\in K_\d,c\in K_\nu$,
\begin{align*}
&[(a\o x)(b\o y)](c\o z)\\
&=[a(x_{(1,\a)}\rightharpoonup_\a b_{(1,\d)})\o(x_{(2,\a)}\leftharpoonup_\d b_{(2,\d)})y](c\o z)\\
&=a(x_{(1,\a)}\rightharpoonup_\a b_{(1,\d)})[((x_{(2,\a)}\leftharpoonup_\d b_{(2,\d)})y)_{(1,\a\b)}\rightharpoonup_{\a\b}c_{(1,\nu)}]\\
&\quad\o[((x_{(2,\a)}\leftharpoonup_\d b_{(2,\d)})y)_{(2,\a\b)}\leftharpoonup_{\nu}c_{(2,\nu)}]z\\
&=a(x_{(1,\a)}\rightharpoonup_\a b_{(1,\d)})[(x_{(2,\a)}\leftharpoonup_\d b_{(2,\d)})y_{(1,\b)}\rightharpoonup_{\a\b}c_{(1,\nu)}]\\
&\quad\o[(x_{(3,\a)}\leftharpoonup_\d b_{(3,\d)})y_{(2,\b)}\leftharpoonup_{\nu}c_{(2,\nu)}]z\\
&=a(x_{(1,\a)}\rightharpoonup_\a b_{(1,\d)})[(x_{(2,\a)}\leftharpoonup_\d b_{(2,\d)})\rightharpoonup_{\a}(y_{(1,\b)}\rightharpoonup_{\b}c_{(1,\nu)})]\\
&\quad\o[((x_{(3,\a)}\leftharpoonup_\b b_{(3,\b)})\leftharpoonup_\nu (y_{(2,\b)}\rightharpoonup_{\b}c_{(2,\nu)}))(y_{(3,\b)}\leftharpoonup_{\nu}c_{(3,\nu)})]z\\
&=a(x_{(1,\a)}\rightharpoonup_{\a} b_{(1,\d)}(y_{(1,\b)}\rightharpoonup_{\b}c_{(1,\nu)}))\\
&\quad\o[(x_{(2,\a)}\leftharpoonup_{\d\nu} b_{(2,\d)} (y_{(2,\b)}\rightharpoonup_{\b}c_{(2,\nu)}))(y_{(2,\b)}\leftharpoonup_{\nu}c_{(2,\nu)})]z\\
&=(a\o x)(b(y_{(1,\b)}\rightharpoonup_\b c_{(1,\nu)})\o(y_{(2,\b)}\leftharpoonup_\nu c_{(2,\nu)})z)\\
&=(a\o x)[(b\o y)(c\o z)],
\end{align*}
thus $K\o H$ is associative. 
\begin{align*}
&\Delta_{(\g\d,\a\b)}((a\o x)(b\o y))\\
&=\Delta_{(\g\d,\a\b)}(a(x_{(1,\a)}\rightharpoonup_\a b_{(1,\d)})\o(x_{(2,\a)}\leftharpoonup_\d b_{(2,\d)})y)\\
&=(a(x_{(1,\a)}\rightharpoonup_\a b_{(1,\d)}))_{(1,\g\d)}\o((x_{(2,\a)}\leftharpoonup_\d b_{(2,\d)})y)_{(1,\a\b)}\\
&\quad \o(a(x_{(1,\a)}\rightharpoonup_\a b_{(1,\d)}))_{(2,\g\d)}\o((x_{(2,\a)}\leftharpoonup_\d b_{(2,\d)})y)_{(2,\a\b)}\\
&=a_{(1,\g)}(x_{(1,\a)}\rightharpoonup_\a b_{(1,\d)})_{(1,\d)}\o(x_{(2,\a)}\leftharpoonup_\d b_{(2,\d)})_{(1,\a)}y_{(1,\b)}\\
&\quad \o a_{(2,\g)}(x_{(1,\a)}\rightharpoonup_\a b_{(1,\d)})_{(2,\d)}\o(x_{(2,\a)}\leftharpoonup_\d b_{(2,\d)})_{(2,\a)}y_{(2,\b)}\\
&=a_{(1,\g)}(x_{(1,\a)}\rightharpoonup_\a b_{(1,\d)})\o(x_{(3,\a)}\leftharpoonup_\b b_{(3,\b)})y_{(1,\b)}\\
&\quad \o a_{(2,\g)}(x_{(2,\a)}\rightharpoonup_\a b_{(2,\d)})\o(x_{(4,\a)}\leftharpoonup_\b b_{(4,\b)})y_{(2,\b)}\\
&=a_{(1,\g)}(x_{(1,\a)}\rightharpoonup_\a b_{(1,\d)})\o (x_{(2,\a)}\rightharpoonup_\a b_{(2,\d)})y_{(1,\b)}\\
&\quad \o a_{(2,\g)}(x_{(3,\a)}\leftharpoonup_\d b_{(3,\d)})\o(x_{(4,\a)}\leftharpoonup_\d b_{(4,\d)})y_{(2,\b)}\\
&=\Delta_{(\g,\a)}(a\o x)\Delta_{(\d,\b)}(b\o y),
\end{align*}
and
\begin{align*}
&S_{(\g,\a)}(a_{(1,\g)}\o x_{(1,\a)})(a_{(2,\g)}\o x_{(2,\a)})\\
&=(S_{H,\a}(x_{(2,\a)})\rightharpoonup_{\bar{\a}}S_{K,\g}(a_{(2,\g)})\o S_{H,\a}(x_{(1,\a)})\leftharpoonup_{\bar{\g}}S_{K,\g}(a_{(1,\g)}))(a_{(3,\g)}\o x_{(3,\a)})\\
&=(S_{H,\a}(x_{(2,\a)})\rightharpoonup_{\bar{\a}}S_{K,\g}(a_{(2,\g)}))((S_{H,\a}(x_{(1,\a)})\leftharpoonup_{\bar{\g}}S_{K,\g}(a_{(1,\g)}))_{(1,\bar{\a})}\rightharpoonup_{\bar{\a}} a_{(3,\g)})\\
&\quad\o((S_{H,\a}(x_{(1,\a)})\leftharpoonup_{\bar{\g}}S_{K,\g}(a_{(1,\g)}))_{(2,\bar{\a})}\leftharpoonup_{\a} a_{(4,\g)})x_{(3,\a)}\\
&=(S_{H,\a}(x_{(2,\a)})\rightharpoonup_{\bar{\a}}S_{K,\g}(a_{(2,\g)}))((S_{H,\a}(x_{(1,\a)})_{(1,\bar{\a})}\leftharpoonup_{\bar{\g}}S_{K,\g}(a_{(1,\g)})_{(1,\bar{\a})})\rightharpoonup_{\a} a_{(3,\g)})\\
&\quad\o((S_{H,\a}(x_{(1,\a)})_{(2,\bar{\a})}\leftharpoonup_{\bar{\g}}S_{K,\g}(a_{(1,\g)})_{(2,\bar{\a})})\leftharpoonup_{\a} a_{(4,\g)})x_{(3,\a)}\\
&=(S_{H,\a}(x_{(3,\a)})\rightharpoonup_{\bar{\a}}S_{K,\g}(a_{(3,\g)}))((S_{H,\a}(x_{(2,\a)})\leftharpoonup_{\bar{\g}}S_{K,\g}(a_{(2,\g)}))\rightharpoonup_{\a} a_{(4,\g)})\\
&\quad\o((S_{H,\a}(x_{(1,\a)})\leftharpoonup_{\bar{\g}}S_{K,\g}(a_{(1,\g)}))\leftharpoonup_{\a} a_{(5,\a)})x_{(4,\a)}\\
&=S_{H,\a}(x_{(2,\a)})\rightharpoonup_{\bar{\a}}(S_{K,\g}(a_{(3,\g)})a_{(4,\g)})\o((S_{H,\a}(x_{(1,\a)})\leftharpoonup_{\bar{\g}}S_{K,\g}(a_{(1,\g)}))\leftharpoonup_{\a} a_{(5,\a)})x_{(3,\a)}\\
&=((S_{H,\a}(x_{(1,\a)})\leftharpoonup_{\bar{\g}}S_{K,\g}(a_{(1,\g)})a_{(2,\g)})x_{(2,\a)}\\
&=\v_{K,\a}(a) \v_{H,\a}(x).
\end{align*}
Similarly $(a_{(1,\g)}\o x_{(1,\a)})S_{\g,\a}(a_{(2,\g)}\o x_{(2,\a)})=\v_{K,\g}(a) \v_{H,\a}(x).$
\end{proof}

\begin{proposition}\label{pro:brace}
Let $\pi$ be an abelian group and  $(H,\cdot,\circ)$ be a cocommutative Hopf $\pi$-brace, then $(H_\circ,H_\circ)$ is a matched pair of cocommutative Hopf $\pi$-braces with 
$$x\rightharpoonup_\a a=S_\a(x_{(1,\a)})(x_{(2,\a)}\circ a),\ x\leftharpoonup_\b a=T_\b(x_{(1,\a)}\rightharpoonup_\b a_{(1,\b)})\circ x_{(2,\a)}\circ a_{(2,\b)},$$
for all $x\in H_\a,a\in H_\b$.
\end{proposition}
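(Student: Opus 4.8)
The plan is to read the Definition of a matched pair of Hopf $\pi$-algebras with $\pi_1=\pi_2=\pi$ and both slots occupied by the same Hopf $\pi$-algebra $H_\circ=(H,\circ,T)$; in particular every product and every antipode occurring in the axioms \eqref{mp1}--\eqref{mp5} is that of $H_\circ$, while the coalgebra structure is the common $(\Delta_\a,\v_\a)$. With this reading, the two ``modulelike coalgebra'' requirements are already in hand. That each $K_\g=H_\g$ is a left $H_\circ$-modulelike coalgebra under $\rightharpoonup$ is Proposition~\ref{pro:pi-bialg}: part~(1) supplies the module law $(g\circ h)\rightharpoonup\ell=g\rightharpoonup(h\rightharpoonup\ell)$ and $1\rightharpoonup\ell=\ell$, and the cocommutative part~(3) supplies the coalgebra compatibility of $\rightharpoonup$. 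That each $H_\a$ is a right $H_\circ$-modulelike coalgebra under $\leftharpoonup$ is the earlier proposition asserting exactly this; the one remaining identity $\v_\a(x\leftharpoonup_\b a)=\v_\a(x)\v_\b(a)$ follows by expanding the definition of $\leftharpoonup$ and using that $\v$ is multiplicative for $\circ$, that $\v T=\v$, and that $\v(x\rightharpoonup a)=\v(x)\v(a)$.

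It then remains to verify the five compatibility conditions. Conditions \eqref{mp2} and \eqref{mp4} are the unit axioms: \eqref{mp2} reads $x\rightharpoonup_\a 1=S_\a(x_{(1,\a)})(x_{(2,\a)}\circ 1)=S_\a(x_{(1,\a)})x_{(2,\a)}=\v_\a(x)1$, already recorded in Proposition~\ref{pro:pi-bialg}(1); and, since $1_\circ=1$ and $1\rightharpoonup a=a$, \eqref{mp4} collapses to $T_\g(a_{(1,\g)})\circ a_{(2,\g)}=\v_\g(a)1$, the antipode axiom of $H_\circ$. Condition \eqref{mp5} is forced by cocommutativity: both sides arise from $x\otimes a$ by applying the coproduct of the tensor coalgebra $H_\a\otimes H_\g$ and then feeding the first tensor leg to $\leftharpoonup$ and the second to $\rightharpoonup$, so the two expressions differ only by the flip of a cocommutative coproduct and therefore agree.

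The substance of the proof is \eqref{mp1} and \eqref{mp3}, which are mirror images of one another. For \eqref{mp1} I would begin from the right-hand side, expand $x_{(2,\a)}\leftharpoonup a_{(2,\g)}$ by its definition, and turn the ensuing $\circ$-products that act through $\rightharpoonup$ into a single iterated action via the module law of Proposition~\ref{pro:pi-bialg}(1). Splitting the factor $x_{(1,\a)}\rightharpoonup a_{(1,\g)}$ by the coalgebra compatibility of $\rightharpoonup$ then produces a pair of the form $P_{(1)}\circ T(P_{(2)})$ with $P$ a component of $x\rightharpoonup a$, which collapses to $\v(P)1=\v(x)\v(a)1$ by the antipode axiom for $\circ$; after this collapse the surviving terms reassemble, via the relation $g\circ h=g_{(1)}(g_{(2)}\rightharpoonup h)$ of Proposition~\ref{pro:pi-bialg}(2) and the defining formula $g\rightharpoonup a=S_\a(g_{(1,\a)})(g_{(2,\a)}\circ a)$, into $S_\a(x_{(1,\a)})(x_{(2,\a)}\circ a\circ b)=x\rightharpoonup_\a(a\circ b)$. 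Condition \eqref{mp3} runs symmetrically: one expands $(x\circ y)\leftharpoonup a$ on the left, pushes $(x_{(1,\a)}\circ y_{(1,\b)})\rightharpoonup a$ through the same module law, and eliminates the identical $\circ$-antipode pair to recover $(x\leftharpoonup(y_{(1,\b)}\rightharpoonup a_{(1,\g)}))\circ(y_{(2,\b)}\leftharpoonup a_{(2,\g)})$.

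The main obstacle is bookkeeping rather than ideas: one must keep the Sweedler legs of $x,a$ (respectively $x,y,a$) aligned through the nested $\rightharpoonup$ and $\leftharpoonup$ substitutions and spot precisely where the collapse $P_{(1)}\circ T(P_{(2)})=\v(P)1$ applies. Cocommutativity is used repeatedly to reorder coproduct legs so that these antipode pairs line up, and the brace identities \eqref{eq:Hbrace} and \eqref{eq:Hbrace2} of Lemma~\ref{lem:truco} are what make the module and coalgebra properties being invoked valid in the first place.
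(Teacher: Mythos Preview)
Your proposal is correct and follows essentially the same route as the paper. The paper's proof is terser—it only writes out the computations for \eqref{mp1} and \eqref{mp3} and leaves the remaining axioms implicit—while you account for all five plus the two modulelike-coalgebra prerequisites; but for the two substantive identities you invoke exactly the same ingredients the paper does: the relation $g\circ h=g_{(1)}(g_{(2)}\rightharpoonup h)$ from Proposition~\ref{pro:pi-bialg}(2), the module law $(g\circ h)\rightharpoonup\ell=g\rightharpoonup(h\rightharpoonup\ell)$, the coalgebra compatibility of $\rightharpoonup$, and the antipode collapse $P_{(1)}\circ T(P_{(2)})=\v(P)1$. The only cosmetic difference is that the paper opens \eqref{mp1} by first rewriting the outer $\circ$ via Proposition~\ref{pro:pi-bialg}(2) before expanding $\leftharpoonup$, whereas you expand $\leftharpoonup$ first; and for \eqref{mp3} the paper runs the computation from the right-hand side rather than the left. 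These orderings are interchangeable and the proofs are otherwise identical.
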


\begin{proof}
For all $x\in H_\a,y\in H_\b,a\in H_\g,b\in H_\d$,
\begin{align*}
&(x_{(1,\a)}\rightharpoonup_{\a}a_{(1,\g)})\circ((x_{(2,\a)}\leftharpoonup_{\g}a_{(2,\g)})\rightharpoonup_\a b))\\
&=(x_{(1,\a)}\rightharpoonup_{\a}a_{(1,\g)})_{(1,\g)}((x_{(1,\a)}\rightharpoonup_{\a}a_{(1,\g)})_{(2,\g)}\rightharpoonup_\g((x_{(2,\a)}\leftharpoonup_{\g}a_{(2,\g)})\rightharpoonup_\a b))\\
&=(x_{(1,\a)}\rightharpoonup_{\a}a_{(1,\g)})(((x_{(2,\a)}\rightharpoonup_{\a}a_{(2,\g)})\circ(x_{(3,\a)}\leftharpoonup_{\a}a_{(3,\g)}))\rightharpoonup_{\g\a} b)\\
&=(x_{(1,\a)}\rightharpoonup_{\a}a_{(1,\g)})((x_{(2,\a)}\rightharpoonup_{\g}a_{(2,\g)})\circ T_\g(x_{(3,\a)}\rightharpoonup_\g a_{(3,\g)})\circ x_{(4,\a)}\circ a_{(4,\g)})\rightharpoonup_{\g\a} b)\\
&=(x_{(1,\a)}\rightharpoonup_{\a}a_{(1,\g)})( (x_{(2,\a)}\circ a_{(2,\g)})\rightharpoonup_{\a\g} b)\\
&=(x_{(1,\a)}\rightharpoonup_{\a}a_{(1,\g)})(x_{(2,\a)}\rightharpoonup_\a (a_{(2,\g)}\rightharpoonup_\g b))\\
&=x\rightharpoonup_{\a}( a_{(1,\g)}(a_{(2,\g)}\rightharpoonup_\g b))\\
&=x\rightharpoonup_{\a}(a\circ b),
\end{align*}
and
\begin{align*}
&(a\leftharpoonup_\g(b_{(1,\d)}\rightharpoonup_\d x_{(1,\a)}))\circ(b_{(2,\d)}\leftharpoonup_\a x_{(2,\a)}))\\
&=T_\a(a_{(1,\g)}\circ b_{(1,\d)}\rightharpoonup_{\g\d} x_{(1,\a)})\circ a_{(2,\b)}\circ(b_{(2,\d)}\rightharpoonup_\d x_{(2,\a)})\circ T_\a(b_{(3,\d)}\rightharpoonup_\d x_{(3,\a)})\circ b_{(4,\d)}\circ x_{(4,\a)}\\
&=T_\a(a_{(1,\g)}\circ b_{(1,\d)}\rightharpoonup_{\g\d} x_{(1,\a)})\circ a_{(2,\b)}\circ b_{(2,\d)}\circ x_{(2,\a)}\\
&=(a\circ b)\leftharpoonup_\a x.
\end{align*}
\end{proof}

\begin{proposition}\label{pro:mp}
Let $\pi$ be an abelian group and  $(H,\circ,\Delta,T)$ be a cocommutative Hopf $\pi$-algebra. Assume that $(H,H,\rightharpoonup,\leftharpoonup)$ is a matched pair and that 
\begin{equation}\label{mp6}
a\circ b=(a_{(1,\a)}\rightharpoonup_\a b_{(1,\b)})\circ(a_{(2,\a)}\leftharpoonup_\b b_{(2,\b)}),
\end{equation}
for all $a\in H_\a,b\in H_\b$. Then $(H,\cdot,\circ)$ is a Hopf $\pi$-brace with
$$ab=a_{(1,\a)}\circ(T_\a(a_{(2,\a)})\rightharpoonup_{\bar{\a}}b),\ S_\a(a)=a_{(1,\a)}\rightharpoonup_\a T_\a(a_{(2,\a)}).$$
\end{proposition}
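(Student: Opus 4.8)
The goal is to check the three clauses in the definition of a Hopf $\pi$-brace: that $(H,\cdot,1,S)$ is a Hopf $\pi$-algebra, that $(H,\circ,T)$ is one (which is part of the hypothesis), and that the compatibility \eqref{eq:Hbrace} holds. The conceptual backbone is the Hopf $\pi$-algebra $H\bowtie H$ produced from the matched pair by the preceding theorem (taking $K=H$): condition \eqref{mp6} is precisely what makes the multiplication $\mu(u\o v)=u\circ v$ from $H\bowtie H$ to $(H,\circ)$ compatible with products, since $\mu\big((a\o x)(b\o y)\big)=a\circ(x_{(1,\a)}\rightharpoonup b_{(1,\d)})\circ(x_{(2,\a)}\leftharpoonup b_{(2,\d)})\circ y=a\circ x\circ b\circ y$ by \eqref{mp6}. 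I will keep this picture in mind to organize the Sweedler computations below, which otherwise rest on the matched-pair axioms \eqref{mp1}--\eqref{mp5} and the $\circ$-antipode relations for $T$.

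First I would dispose of the unit axioms and record two auxiliary identities. The units are immediate: $1\cdot h=1\circ(T_e(1)\rightharpoonup_e h)=1\circ h=h$, and by \eqref{mp2} one has $h\cdot 1=h_{(1,\a)}\circ(T_\a(h_{(2,\a)})\rightharpoonup_{\bar{\a}}1)=h$. The first auxiliary identity inverts the definition of $\cdot$: using the module property $(u\circ v)\rightharpoonup w=u\rightharpoonup(v\rightharpoonup w)$ of the modulelike coalgebra together with the $\circ$-antipode relation $T_\a(g_{(1,\a)})\circ g_{(2,\a)}=\v_\a(g)1$, one finds
\[
g_{(1,\a)}(g_{(2,\a)}\rightharpoonup_\a h)=g_{(1,\a)}\circ\big((T_\a(g_{(2,\a)})\circ g_{(3,\a)})\rightharpoonup_e h\big)=g\circ h.
\]
The second auxiliary identity is the module--algebra property $g\rightharpoonup_\a(h\ell)=(g_{(1,\a)}\rightharpoonup_\a h)(g_{(2,\a)}\rightharpoonup_\a\ell)$ for the new product; this I would prove by expanding $h\ell$ through its definition and pushing the action across the $\circ$-product with \eqref{mp1}.

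Associativity of $\cdot$ is the main obstacle and the core of the argument. I would expand $a(bc)$ by substituting $bc=b_{(1,\b)}\circ(T_\b(b_{(2,\b)})\rightharpoonup_{\bar{\b}}c)$ into $a_{(1,\a)}\circ(T_\a(a_{(2,\a)})\rightharpoonup_{\bar{\a}}(\,\cdot\,))$, push the outer action through the $\circ$-product by \eqref{mp1}, and then collapse the resulting $\leftharpoonup$-terms with the $\circ$-antipode and cocommutativity until the expression becomes symmetric in a form that equally matches the expansion of $(ab)c$; the identity $g\circ h=g_{(1,\a)}(g_{(2,\a)}\rightharpoonup_\a h)$ from the previous step is what lets the two reorganizations meet. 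This is the longest calculation, and cocommutativity — used exactly as in the identity $S_\b(g\rightharpoonup_\a h)=g\rightharpoonup_\a S_\b(h)$ of Proposition \ref{pro:pi-bialg} — is essential in order to commute antipodes past comultiplications.

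The remaining clauses are then comparatively routine. That $\Delta$ and $\v$ are multiplicative for $\cdot$ follows from conditions (3) and (4) in the definition of a modulelike $\pi$-bialgebra for $\rightharpoonup$ together with the fact that $\circ$ is a coalgebra map. That $S_\a(a)=a_{(1,\a)}\rightharpoonup_\a T_\a(a_{(2,\a)})$ is a $\cdot$-antipode follows directly from the first auxiliary identity: for instance $a_{(1,\a)}\cdot S_\a(a_{(2,\a)})=a_{(1,\a)}(a_{(2,\a)}\rightharpoonup_\a T_\a(a_{(3,\a)}))=a_{(1,\a)}\circ T_\a(a_{(2,\a)})=\v_\a(a)1$, and the opposite relation is analogous. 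Finally, for \eqref{eq:Hbrace} I would expand the left-hand side as $g\circ(h\ell)=g_{(1,\a)}(g_{(2,\a)}\rightharpoonup_\a(h\ell))=g_{(1,\a)}(g_{(2,\a)}\rightharpoonup_\a h)(g_{(3,\a)}\rightharpoonup_\a\ell)$ via the two auxiliary identities, and expand the right-hand side through the definitions of $\cdot$ and $S_\a$; the two sides then agree by associativity and the antipode relations, mirroring in reverse the computation of Proposition \ref{pro:pi-bialg}(1). Throughout, the bookkeeping organized by $\mu$ and $H\bowtie H$ keeps the matched-pair manipulations under control, with associativity of $\cdot$ remaining the one genuinely delicate point.
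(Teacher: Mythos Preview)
Your outline matches the paper's argument closely: the two auxiliary identities $g_{(1,\a)}(g_{(2,\a)}\rightharpoonup_\a h)=g\circ h$ and $g\rightharpoonup_\a(h\ell)=(g_{(1,\a)}\rightharpoonup_\a h)(g_{(2,\a)}\rightharpoonup_\a\ell)$ are exactly what the paper establishes first, and the paper then deduces associativity of~$\cdot$, the antipode axioms, and~\eqref{eq:Hbrace} in the same order and by the same mechanisms you describe.

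There is one genuine gap. You write that for the antipode ``the opposite relation is analogous,'' but it is not. The right axiom $a_{(1,\a)}\cdot S_\a(a_{(2,\a)})=\v_\a(a)1$ collapses immediately via your first auxiliary identity, as you show. The left axiom $S_\a(a_{(1,\a)})\cdot a_{(2,\a)}=\v_\a(a)1$, however, unpacks to
\[
(a_{(1,\a)}\rightharpoonup_\a T_\a(a_{(2,\a)}))\circ\big(T_{\bar\a}(a_{(3,\a)}\rightharpoonup_\a T_\a(a_{(4,\a)}))\rightharpoonup_\a a_{(5,\a)}\big),
\]
and to simplify the inner $T_{\bar\a}(\,\cdot\rightharpoonup\cdot\,)$ you need to know how $T$ interacts with~$\rightharpoonup$. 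The paper obtains this from~\eqref{mp6}: applying $T$ and the matched-pair cocycle relations yields
\[
T_\b(a\rightharpoonup_\a b)=(a_{(1,\a)}\leftharpoonup_\b b_{(1,\b)})\circ T_{\a\b}(a_{(2,\a)}\circ b_{(2,\b)}),
\]
which brings~$\leftharpoonup$ into play; one then combines this with~\eqref{mp1} to collapse the expression. So the left antipode axiom is precisely the place where hypothesis~\eqref{mp6} is used in an essential way beyond the $H\bowtie H\to H$ heuristic, and it requires a separate computation rather than a symmetric repetition of the right-hand case.
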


\begin{proof}
For all $a\in H_\a,b\in H_\b,c\in H_\g$,
\begin{align*}
\Delta_{\a\b}(ab)&=\Delta_{\a\b}(a_{(1,\a)}\circ(T_\a(a_{(2,\a)})\rightharpoonup_{\bar{\a}}b))\\
&=\Delta_\a(a)\circ\Delta_\b(T_\a(a_{(2,\a)})\rightharpoonup_{\bar{\a}}b)\\
&=\Delta_\a(a)\Delta_\b(b),
\end{align*}
and
\begin{align*}
a_{(1,\a)}(a_{(2,\a)}\rightharpoonup_{\a}b)&=a_{(1,\a)}\circ(T_\a(a_{(2,\a)})\rightharpoonup_{\bar{\a}}(a_{(3,\a)}\rightharpoonup_{\a}b))\\
&=a_{(1,\a)}\circ((T_\a(a_{(2,\a)})\circ a_{(3,\a)})\rightharpoonup_{e}b))\\
&=a_{(1,\a)}\circ(1\circ b)\\
&=a\circ b.
\end{align*}
Since
\begin{align*}
&a\rightharpoonup_\a(bc)=a\rightharpoonup_\a(b_{(1,\b)}\circ(T_\b(b_{(2,\b)})\rightharpoonup_{\bar{\b}}c))\\
&=(a_{(1,\a)}\rightharpoonup_\a b_{(1,\b)})\circ((a_{(2,\a)}\leftharpoonup_\b b_{(2,\b)})\rightharpoonup_\a(T_\b(b_{(2,\b)})\rightharpoonup_{\bar{\b}}c))\\
&=(a_{(1,\a)}\rightharpoonup_\a b_{(1,\b)})((a_{(2,\a)}\rightharpoonup_\a b_{(2,\b)})\rightharpoonup_\b((a_{(3,\a)}\leftharpoonup_\b b_{(3,\b)})\rightharpoonup_\a(T_\b(b_{(4,\b)})\rightharpoonup_{\bar{\b}}c)))\\
&=(a_{(1,\a)}\rightharpoonup_\a b_{(1,\b)})((a_{(2,\a)}\rightharpoonup_\a b_{(2,\b)})\circ(a_{(3,\a)}\leftharpoonup_\b b_{(3,\b)})\rightharpoonup_{\a\b}(T_\b(b_{(4,\b)})\rightharpoonup_{\bar{\b}}c)))\\
&=(a_{(1,\a)}\rightharpoonup_\a b_{(1,\b)})((a_{(2,\a)}\circ b_{(2,\b)})\rightharpoonup_{\a\b}(T_\b(b_{(4,\b)})\rightharpoonup_{\bar{\b}}c))\\
&=(a_{(1,\a)}\rightharpoonup_\a b_{(1,\b)})(a_{(2,\a)} \rightharpoonup_{\a}(b_{(2,\b)}T_\b(b_{(4,\b)})\rightharpoonup_{e}c))\\
&=(a_{(1,\a)}\rightharpoonup_\a b)(a_{(2,\a)}\rightharpoonup_\a c).
\end{align*}
we have
\begin{align*}
a(bc)&=a_{(1,\a)}\circ(T_\a(a_{(2,\a)})\rightharpoonup_{\bar{\a}}(bc))\\
&=a_{(1,\a)}\circ((T_\a(a_{(2,\a)})\rightharpoonup_{\bar{\a}}b_{(1,\b)})(T_\a(a_{(3,\a)})\rightharpoonup_{\bar{\a}}c))\\
&=a_{(1,\a)}\circ(T_\a(a_{(2,\a)})\rightharpoonup_{\bar{\a}}b_{(1,\b)})\circ (T_\b(T_\a(a_{(3,\a)})\rightharpoonup_{\bar{\a}}b_{(2,\b)})\rightharpoonup_{\bar{\b}}(T_\a(a_{(4,\a)})\rightharpoonup_{\bar{\a}}c))\\
&=(a_{(1,\a)}b_{(1,\b)})\circ(T_\b(T_\a(a_{(3,\a)})\rightharpoonup_{\bar{\a}}b_{(2,\b)})\circ T_\a(a_{(4,\a)})\rightharpoonup_{\bar{\a}\bar{\b}}c)\\
&=(a_{(1,\a)}b_{(1,\b)})\circ(T_{\a\b}(a_{(4,\a)}\circ (T_\a(a_{(3,\a)})\rightharpoonup_{\bar{\a}}b_{(2,\b)})) \rightharpoonup_{\bar{\a}\bar{\b}}c)\\
&=(a_{(1,\a)}b_{(1,\b)})\circ(T_{\a\b}(a_{(3,\a)}b_{(2,\b)}) \rightharpoonup_{\bar{\a}\bar{\b}}c)\\
&=(ab)c.
\end{align*}
For the antipode $S$,
\begin{align*}
a_{(1,\a)}S_\a(a_{(2,\a)})&=a_{(1,\a)}(a_{(2,\a)}\rightharpoonup_\a T_\a(a_{(3,\a)}))\\
&=a_{(1,\a)}\circ(T_\a(a_{(2,\a)})\rightharpoonup_{\bar{\a}}(a_{(3,\a)}\rightharpoonup_\a T_\a(a_{(4,\a)})))\\
&=a_{(1,\a)}\circ((T_\a(a_{(2,\a)})\circ a_{(3,\a)})\rightharpoonup_\e T_\a(a_{(4,\a)}))\\
&=a_{(1,\a)}\circ T_\a(a_{(2,\a)})\\
&=\v_\a(a)1,
\end{align*}
and using (\ref{mp6}) we have
$$T_\b(a\rightharpoonup_\a b)=(a_{(1,\a)}\leftharpoonup_\b b_{(1,\b)})\circ T_{\a\b}(a_{(2,\a)}\circ b_{(2,\b)}).$$
Thus
\begin{align*}
&S_\a(a_{(1,\a)})a_{(2,\a)}\\
&=(a_{(1,\a)}\rightharpoonup_\a T_\a(a_{(2,\a)}))\circ(T_{\bar{\a}}(a_{(3,\a)}\rightharpoonup_\a T_\a(a_{(4,\a)}))\rightharpoonup_\a a_{(5,\a)})\\
&=(a_{(1,\a)}\rightharpoonup_\a T_\a(a_{(2,\a)}))\circ((a_{(3,\a)}\leftharpoonup_{\bar{\a}}T_\a(a_{(4,\a)}))\circ T_{e}(a_{(5,\a)}\circ T_\a(a_{(6,\a)}))\rightharpoonup_\a a_{(7,\a)})\\
&=(a_{(1,\a)}\rightharpoonup_\a T_\a(a_{(2,\a)}))\circ((a_{(3,\a)}\leftharpoonup_{\bar{\a}}T_\a(a_{(4,\a)}))\rightharpoonup_\a a_{(5,\a)})\\
&=a_{(1,\a)}\rightharpoonup_\a( T_\a(a_{(2,\a)})a_{(3,\a)})\\
&=\v_\a(a)1.
\end{align*}
For the compatibility condition, we have
\begin{align*}
&(a_{(1,\a)}\circ b)S(a_{(2,\a)})(a_{(3,\a)}\circ c)\\
&=(a_{(1,\a)}\circ b)S(a_{(2,\a)})a_{(3,\a)}(a_{(4,\a)}\rightharpoonup_\a c)\\
&=(a_{(1,\a)}\circ b)(a_{(2,\a)}\rightharpoonup_\a c)\\
&=(a_{(1,\a)}\circ b_{(1,\b)})\circ(T_{\a\b}(a_{(2,\a)}\circ b_{(2,\b)})\rightharpoonup_{\bar{\a}\bar{\b}}(a_{(3,\a)}\rightharpoonup_\a c))\\
&=(a_{(1,\a)}\circ b_{(1,\b)})\circ((T_{\b}(b_{(2,\b)})\circ T_{\a}(a_{(2,\a)})\circ a_{(3,\a)})\rightharpoonup_{\bar{\b}} c)\\
&=a\circ (b_{(1,\b)}\circ(T_{\b}(b_{(2,\b)})\rightharpoonup_{\bar{\b}} c))\\
&=a\circ (b_{(1,\b)}\circ(bc).
\end{align*}
The proof is completed.
\end{proof}

Let $(H,\cdot,1,\Delta,\v,S)$ be a cocommutative Hopf $\pi$-algebra and $\mathbf{M_p}(H)$ the category consisting of objects the matched pair $(H,H)$ satisfying the relation (\ref{mp6}). The morphism in $\mathbf{M_p}(H)$ is a morphism of Hop $\pi$-algebra $f:H\rightarrow H$ such that for all $a\in H_\a,b\in H_\b$, $f(a\rightharpoonup_\a b)=f(a)\rightharpoonup_\a f(b),\ f(a\leftharpoonup_\b b)=f(a)\leftharpoonup_\b f(b)$.

\begin{theorem}
Let $\pi$ be an abelian group and $(H,\cdot,1,\Delta,\v,S)$ a cocommutative Hopf $\pi$-algebra. The category $\mathbf{Br}(H)$ and $\mathbf{M_p}(H)$ are equivalent.
\end{theorem}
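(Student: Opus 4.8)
The plan is to exhibit an explicit pair of functors $F\colon \mathbf{Br}(H)\to\mathbf{M_p}(H)$ and $G\colon \mathbf{M_p}(H)\to\mathbf{Br}(H)$, both acting as the identity on underlying linear maps, and to check that the two composites equal the identity functors. On objects, $F$ is given by Proposition~\ref{pro:brace}: a cocommutative Hopf $\pi$-brace $(H,\cdot,\circ)$ is sent to the matched pair with $x\rightharpoonup_\a a=S_\a(x_{(1,\a)})(x_{(2,\a)}\circ a)$ and $x\leftharpoonup_\b a=T_\b(x_{(1,\a)}\rightharpoonup_\b a_{(1,\b)})\circ x_{(2,\a)}\circ a_{(2,\b)}$; on objects $G$ is given by Proposition~\ref{pro:mp}, sending a matched pair satisfying~\eqref{mp6} to the Hopf $\pi$-brace with $ab=a_{(1,\a)}\circ(T_\a(a_{(2,\a)})\rightharpoonup_{\bar{\a}}b)$ and $S_\a(a)=a_{(1,\a)}\rightharpoonup_\a T_\a(a_{(2,\a)})$. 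First I would confirm that $F$ actually lands in $\mathbf{M_p}(H)$, i.e.\ that its output satisfies~\eqref{mp6}. Expanding the right-hand side of~\eqref{mp6} with the definition of $\leftharpoonup$ and using that $\rightharpoonup$ preserves comultiplication (Proposition~\ref{pro:pi-bialg}(3)) produces the factor $(a_{(1,\a)}\rightharpoonup b_{(1,\b)})\circ T(a_{(2,\a)}\rightharpoonup b_{(2,\b)})$, which the $\circ$-antipode axiom $w_{(1)}\circ T(w_{(2)})=\v(w)1$ collapses to $\v_\a(a)\v_\b(b)1$, leaving exactly $a\circ b$.

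Next I would treat morphisms. A morphism in $\mathbf{Br}(H)$ is a linear map that is simultaneously a Hopf $\pi$-algebra map for $\cdot$ and for $\circ$, hence commutes with $S$ and $T$; since $\rightharpoonup$ and $\leftharpoonup$ are built solely from $\cdot,\circ,S,T$, such a map automatically intertwines the actions and so is a morphism in $\mathbf{M_p}(H)$, giving $F$ on arrows. Conversely a morphism in $\mathbf{M_p}(H)$ is a $\circ$-Hopf-algebra map commuting with $\rightharpoonup,\leftharpoonup$; since the reconstructed product $ab=a_{(1,\a)}\circ(T_\a(a_{(2,\a)})\rightharpoonup_{\bar{\a}}b)$ is built from $\circ,T,\rightharpoonup$, it is preserved, so $G$ is defined on arrows. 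Because both functors leave the underlying map unchanged, compatibility with composition and identities is immediate, so $F,G$ are genuine functors.

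It then remains to verify the two round trips. For $G\circ F=\mathrm{id}_{\mathbf{Br}(H)}$ the $\circ$-structure is untouched and the reconstructed product $a_{(1,\a)}\circ(T_\a(a_{(2,\a)})\rightharpoonup_{\bar{\a}}b)$ equals the original $\cdot$ by the second identity of Proposition~\ref{pro:pi-bialg}(2); hence $G\circ F$ is the identity on objects, and being the identity on maps it is the identity functor. For $F\circ G=\mathrm{id}_{\mathbf{M_p}(H)}$ one must recover the original actions from the brace built by $G$. Recovering $\rightharpoonup$ is short: $x\rightharpoonup'a=S_\a(x_{(1,\a)})(x_{(2,\a)}\circ a)$, and inserting the identity $x_{(2,\a)}\circ a=x_{(2,\a)}(x_{(3,\a)}\rightharpoonup a)$ established inside the proof of Proposition~\ref{pro:mp}, together with the $\cdot$-antipode relation $S_\a(x_{(1,\a)})x_{(2,\a)}=\v_\a(x_{(1,\a)})1$, collapses this to $x\rightharpoonup a$.

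The main obstacle is recovering $\leftharpoonup$. Here I would use the identity $T_\b(x\rightharpoonup_\a a)=(x_{(1,\a)}\leftharpoonup_\b a_{(1,\b)})\circ T_{\a\b}(x_{(2,\a)}\circ a_{(2,\b)})$ derived from~\eqref{mp6} in the proof of Proposition~\ref{pro:mp}: substituting it into $x\leftharpoonup'a=T_\b(x_{(1,\a)}\rightharpoonup a_{(1,\b)})\circ x_{(2,\a)}\circ a_{(2,\b)}$ (using $\rightharpoonup'=\rightharpoonup$ from the previous step) yields $(x_{(1,\a)}\leftharpoonup a_{(1,\b)})\circ T_{\a\b}(x_{(2,\a)}\circ a_{(2,\b)})\circ x_{(3,\a)}\circ a_{(3,\b)}$, whereupon the $\circ$-antipode axiom applied to $w=x_{(2,\a)}\circ a_{(2,\b)}$ reduces the remaining factors to the scalar $\v(x_{(2,\a)})\v(a_{(2,\b)})$, leaving $x\leftharpoonup a$. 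Cocommutativity is used throughout to realign the Sweedler legs, exactly as in Propositions~\ref{pro:pi-bialg}(3),~\ref{pro:brace}, and~\ref{pro:mp}; the only real care needed is the bookkeeping of comultiplication factors when inverting~\eqref{mp6}, after which the equivalence of $\mathbf{Br}(H)$ and $\mathbf{M_p}(H)$ follows.
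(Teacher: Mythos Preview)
Your proposal is correct and follows essentially the same approach as the paper: define mutually inverse functors $F$ and $G$ via Propositions~\ref{pro:brace} and~\ref{pro:mp}, acting as the identity on morphisms. The paper's own proof merely names these functors and declares the remaining verifications a ``routine exercise,'' whereas you have actually carried them out---including the check that $F$ lands in $\mathbf{M_p}(H)$ (i.e., that~\eqref{mp6} holds) and the recovery of both actions in the $F\circ G$ direction---so your argument is strictly more complete than the paper's.
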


\begin{proof}
Define $F:\mathbf{Br}(H)\rightarrow \mathbf{M_p}(H)$ by $F((H,\cdot,\circ))=(H_\circ,H_\circ)$, where the matched pair $(H_\circ,H_\circ)$ is given in Proposition \ref{pro:brace}. For any morphism in $\mathbf{Br}(H)$, $F(f)=f$. Clearly $F$ is a functor.

Conversely define $G:\mathbf{M_p}(H)\rightarrow\mathbf{Br}(H)$ by $F(H,H)=(H,\cdot,\circ)$, where $(H,\cdot,\circ)$ is the Hopf brace given in Proposition \ref{pro:mp}, and $G(f)=f$ for any morphism in $\mathbf{M_p}(H)$. It is easy to see that $G$ is a functor. By an routine exercise, $\mathbf{Br}(H)$ and $\mathbf{M_p}(H)$ are equivalent.
\end{proof}

\section{Post-Hopf group algebras}
 \def\theequation{3.\arabic{equation}}
 \setcounter{equation} {0}

\begin{definition}   \label{defi:pH}
A {\bf post-Hopf $\pi$-algebra} is a pair $(H,\tr)$, where $H=(\{H_\a\}_{\a\in\pi},\cdot,1,\D,\v,S)$ is a Hopf $\pi$-algebra and $\tr=\{\tr_{\a,\b}:H_\a\o H_\b\rightarrow H_\b\}$ is a family of coalgebra homomorphisms satisfying 
\begin{eqnarray}
\label{eq:P1} &x\tr_{\a,\b\g}(yz)=(x_{(1,\a)}\tr_{\a,\b}y)(x_{(2,\a)}\tr_{\a,\g}z),&\\
\label{eq:P2} &x\tr_{\a,\g}(y\tr_{\b,\g}z)=(x_{(1,\a)}(x_{(2,\a)}\tr_{\a,\b}y))\tr_{\a\b,\g}z,&
\end{eqnarray}
for any $x\in H_\a,y\in H_\b,z\in H_\g$, and the linear map $\theta_{\a,\b}:H_\a\rightarrow \End(H_\b)$ defined by
$$\theta_{\a,\b,x}(y)=x\tr_{\a,\b}y$$
is convolution invertible in $\Hom(H_\a,\End(H_\b))$, that is, there exists unique $\psi_{\a,\b}\in\Hom(H_\a,\End(H_\b))$ such that 
\begin{equation}\label{eq:P-con}
  \psi_{\a,\b,x_{(1,\a)}}\theta_{\a,\b,x_{(2,\a)}}=\theta_{\a,\b,x_{(1,\a)}}\psi_{\a,\b,x_{(2,{\a,\b})}}=\v_\a(x)id_{H_\b}.
\end{equation}
\end{definition}

\begin{remark}
(1) When $\pi=\{1\}$, a post-Hopf $\pi$-algebra could be reduced to a post-Hopf algebra introduced in \cite{LST}.

(2) A post-Hopf $\pi$-algebra $(H,\tr)$ is called cocommutative if each $H_\a$ is cocommutative.
\end{remark}

\begin{lemma}
Let $(H,\tr)$ be a post-Hopf $\pi$-algebra. Then for all $x\in H_\a,y\in H_\b, \a,\b\in\pi$, we have
\begin{eqnarray}
\label{eq:P3} x\tr_{\a,e} 1 &=& \v_\a(x)1, \\
\label{eq:P4} 1\tr_{e,\a} x &=& x, \\
\label{eq:P5} S_\b(x\tr_{\a,\b} y) &=& x\tr_{\a,\bar{\b}} S_\b(y).
\end{eqnarray}
\end{lemma}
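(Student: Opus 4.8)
The plan is to base everything on the convolution-invertibility of the maps $\theta_{\a,\b}$ built into Definition~\ref{defi:pH}, together with the fact that each $\tr_{\a,\b}$ is a coalgebra homomorphism $H_\a\o H_\b\to H_\b$; in particular it intertwines counits, $\v_\b(x\tr_{\a,\b}y)=\v_\a(x)\v_\b(y)$, and sends group-like elements to group-like elements. I will prove \eqref{eq:P3} and \eqref{eq:P4} first (they are independent of each other), and then deduce \eqref{eq:P5}, which uses \eqref{eq:P3}.

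For \eqref{eq:P3}, write $u(x):=x\tr_{\a,e}1$ and let $\lambda_h\in\End(H_\g)$ denote left multiplication by $h\in H_e$. Setting $y=1$ and leaving $z\in H_\g$ free in \eqref{eq:P1} gives, in $\End(H_\g)$, the identity $\theta_{\a,\g,x}=\lambda_{u(x_{(1,\a)})}\,\theta_{\a,\g,x_{(2,\a)}}$. Substituting this into the convolution relation $\theta_{\a,\g,x_{(1,\a)}}\psi_{\a,\g,x_{(2,\a)}}=\v_\a(x)id_{H_\g}$ from \eqref{eq:P-con} and collapsing the inner $\theta\psi$-pair by \eqref{eq:P-con} once more leaves $\lambda_{u(x)}=\v_\a(x)id_{H_\g}$; taking $\g=e$ and evaluating at $1$ yields $u(x)=\v_\a(x)1$.

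For \eqref{eq:P4}, I would first note that $1\tr_{e,e}1$ is group-like, being the image of the group-like $1\o 1$ under the coalgebra map $\tr_{e,e}$, and that \eqref{eq:P1} with $x=y=z=1$ makes it idempotent; since a group-like idempotent in a Hopf algebra is the unit, $1\tr_{e,e}1=1$. Then \eqref{eq:P2} with $x=y=1$ gives $v\,v=v$ for the operator $v:=\theta_{e,\g,1}=1\tr_{e,\g}(-)\in\End(H_\g)$, while \eqref{eq:P-con} at the group-like $x=1$ exhibits the two-sided inverse $\psi_{e,\g,1}$ of $v$. An invertible idempotent is the identity, so $1\tr_{e,\g}x=x$.

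The crux is \eqref{eq:P5}, and the main obstacle is that the target $H_{\bar{\b}}$ is not itself an algebra, so no single-component convolution is available. I would instead work in the convolution algebra $\Hom(H_\a\o H_\b,H)$, where $H=\bigoplus_\g H_\g$ is the total graded algebra with unit $\eta\v:x\o y\mapsto\v_\a(x)\v_\b(y)1_e$. Set $N(x\o y)=x\tr_{\a,\b}y$, $P(x\o y)=S_\b(x\tr_{\a,\b}y)$ and $Q(x\o y)=x\tr_{\a,\bar{\b}}S_\b(y)$. Using that $\tr_{\a,\b}$ is a coalgebra map and the two antipode axioms in $H_\b$, one gets both $N*P=\eta\v$ and $P*N=\eta\v$, so that $P$ is the (unique) two-sided convolution inverse of $N$; the second of these relations is exactly the step that promotes $P$ from a one-sided to a genuine two-sided inverse and is the part I expect to require the most care. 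Finally, applying \eqref{eq:P1} to $x\tr_{\a,e}\bigl(y_{(1,\b)}S_\b(y_{(2,\b)})\bigr)$ and simplifying with \eqref{eq:P3} yields $N*Q=\eta\v$, whence $Q=(P*N)*Q=P*(N*Q)=P$, i.e. \eqref{eq:P5}.
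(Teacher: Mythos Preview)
Your proof is correct. For \eqref{eq:P4} and \eqref{eq:P5} it coincides with the paper's: the paper also shows $\theta_{e,\g,1}$ is an invertible idempotent (deriving $1\tr_{e,e}1=1$ from \eqref{eq:P3} rather than via your group-like idempotent observation, but this is cosmetic), and the paper's elementwise chain for \eqref{eq:P5},
\[
S_\b(x_{(1,\a)}\tr y_{(1,\b)})\cdot(x_{(2,\a)}\tr y_{(2,\b)})\cdot(x_{(3,\a)}\tr S_\b(y_{(3,\b)}))
=\v_\b(x_{(1,\a)}\tr y_{(1,\b)})\,(x_{(2,\a)}\tr S_\b(y_{(2,\b)})),
\]
is exactly your identity $(P*N)*Q=Q$ written out, the other half $P=P*(N*Q)$ being obtained by inserting $\v_\b(y_{(2,\b)})1=y_{(2,\b)}S_\b(y_{(3,\b)})$ and expanding with \eqref{eq:P1}. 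Your worry that $P*N=\eta\v$ ``requires the most care'' is unwarranted: since $\tr_{\a,\b}$ is a coalgebra map, it follows from the left antipode axiom for $S_\b$ just as immediately as $N*P$ follows from the right one.

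The one genuine difference is \eqref{eq:P3}. You invoke the convolution inverse $\psi$ from \eqref{eq:P-con} to cancel a factor of $\theta$ and isolate $\lambda_{u(x)}$. The paper instead argues entirely inside $H_e$ without touching $\psi$: writing $u=x\tr_{\a,e}1$ and using that $\tr_{\a,e}$ is a coalgebra map together with \eqref{eq:P1} (which gives $u_{(1,e)}u_{(2,e)}=(x_{(1,\a)}\tr 1)(x_{(2,\a)}\tr 1)=x\tr 1=u$), one has
\[
u=u_{(1,e)}\v_e(u_{(2,e)})=u_{(1,e)}u_{(2,e)}S_e(u_{(3,e)})=u_{(1,e)}S_e(u_{(2,e)})=\v_e(u)1=\v_\a(x)1.
\]
The paper's route is thus slightly more elementary, showing that \eqref{eq:P3} holds even without the invertibility hypothesis on $\theta$, whereas your argument makes essential use of the full post-Hopf structure; on the other hand, your approach illustrates directly how \eqref{eq:P-con} can be deployed, which is the mechanism behind \eqref{eq:P4} anyway.
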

\begin{proof}
Since $\tr$ is a coalgebra homomorphism, we have
\begin{eqnarray*}
x\tr_{\a,e} 1 &=& (x_{(1,\a)}\tr_{\a,e} 1) \v(x_{(2,\a)}\tr_{\a,e} 1)\\
 &=& (x_{(1,\a)}\tr_{\a,e} 1)\cdot (x_{(2,\a)}\tr_{\a,e} 1)\cdot S_e(x_{(3,\a)}\tr_{\a,e} 1)\\
 &\stackrel{\ref{eq:P1}}{=}& (x_{(1,\a)}\tr_{\a,e} 1)\cdot S(x_{(2,\a)}\tr_{\a,e} 1)\\
 &=& \v_e(x\tr_{\a,e} 1)1 =\v_e(x)1.
\end{eqnarray*}

By Eq.~\eqref{eq:P-con},  we have
$\psi_{e,\a,1}\theta_{e,\a,1}=\theta_{e,\a,1}\psi_{e,\a,1}=id_{H_\a},$
which means that $\theta_{e,\a,1}$ is a linear automorphism of $H_\a$. On the other hand, we have
\begin{eqnarray*}
\theta_{e,\a,1}^2(x) = 1\tr_{e,\a} (1\tr_{e,\a} x)
\stackrel{\eqref{eq:P2}}{=}  (1\tr_{e,\a} 1)\tr_{e,\a} (x)
\stackrel{\eqref{eq:P3}}{=}  1\tr_{e,\a} x = \theta_{e,\a,1}(x).
\end{eqnarray*}
Hence, $1\tr_{e,\a} (x) = \theta_{e,\a,1}(x) = x$.
Finally we have
\begin{eqnarray*}
S_\b(x\tr_{\a,\b} y)&=&S_\b(x_{(1,\a)}\tr_{\a,\b} y_{(1,\b)})\v_\a(x_{(2,\a)})\v_\b(y_{(2,\b)}) \\
 &\stackrel{\ref{eq:P3}}{=}& S_\b(x_{(1,\a)}\tr_{\a,\b} y_{(1,\b)})\cdot(x_{(2,\a)}\tr_{\a,\b} \v_\b(y_{(2,\b)})1)\\
&=&S_\b(x_{(1,\a)}\tr_{\a,\b} y_{(1,\b)})\cdot(x_{(2,\a)}\tr_{\a,\b} (y_{(2,\b)}\cdot S_\b(y_3))) \\
 &\stackrel{\ref{eq:P1}}{=}& S_\b(x_{(1,\a)}\tr_{\a,\b} y_{(1,\b)})\cdot(x_{(2,\a)}\tr_{\a,\b} y_{(2,\b)})\cdot(x_{(3,\a)}\tr_{\a,\bar{\a}} S_\b(y_{(3,\b)}))\\
&=&\v_\b(x_{(1,\a)}\tr_{\a,\b} y_{(1,\b)})(x_{(2,\a)}\tr_{\a,\b} S_\b(y_{(2,\b)}))\\
&=&\v_\a(x_{(1,\a)})\v_\b(y_{(1,\b)})(x_{(2,\a)}\tr S_\b(y_{(2,\b)}))=x\tr S_\b(y). 
\end{eqnarray*}
These finish the proof.
\end{proof}

\begin{theorem}  \label{thm:subHopf}
Let $(H,\tr)$ be a cocommutative post-Hopf $\pi$-algebra. Define
\begin{eqnarray}
\label{post-rbb-1} x *_{\a,\b} y &:=& x_{(1,\a)}\cdot (x_{(2,\b)}\tr_{\a,\b} y),\\
\label{post-rbb-2} T_\a(x) &:=& \psi_{\a,\bar{\a},x_{(1,\a)}}(S_\a(x_{(2,\a)})),
\end{eqnarray}
for all $x\in H_\a,y\in H_\b, \a,\b\in\pi$. Then $H_\tr:=(H,*,1,\Delta,\v,T)$ is a Hopf $\pi$-algebra, which is called the {\bf subadjacent Hopf $\pi$-algebra}. Furthermore $(H_e,\cdot,1,\Delta_e,\v_e,S_e)$ is a left $H_\tr$-modulelike $\pi$-bialgebra under the action $\{\tr_{\a,e}\}_{\a\in\pi}$.
\end{theorem}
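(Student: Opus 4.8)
The plan is to verify, in order, that $(H,*,1,\Delta,\v)$ is a $\pi$-bialgebra, that $T$ furnishes its antipode, and finally that $(H_e,\cdot)$ is a modulelike $\pi$-bialgebra under $\{\tr_{\a,e}\}$; essentially every axiom reduces to the post-Hopf identities \eqref{eq:P1}--\eqref{eq:P5} together with the convolution relation \eqref{eq:P-con}. Since $*_{\a,\b}$ is the composite of $\Delta$, the coalgebra map $\cdot$ and the coalgebra homomorphism $\tr_{\a,\b}$, it is itself a coalgebra homomorphism; hence $\Delta$ and $\v$ are compatible with $*$ and $\v_\b(x*_{\a,\b}y)=\v_\a(x)\v_\b(y)$, so in particular $H_\tr$ inherits cocommutativity from $H$. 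Unitality $1*_{e,\a}x=x$ and $x*_{\a,e}1=x$ is immediate from \eqref{eq:P4} and \eqref{eq:P3}. For associativity I would expand $(x*_{\a,\b}y)*_{\a\b,\g}z$, use that $*$ is a coalgebra map to comultiply the first factor, apply \eqref{eq:P2} to the sub-expression of the form $(x'(x''\tr_{\a,\b}y'))\tr_{\a\b,\g}z$ to convert it into the nested form $x'\tr_{\a,\g}(y'\tr_{\b,\g}z)$, and then merge the two remaining $\tr$-factors via \eqref{eq:P1}; coassociativity and cocommutativity are used to align the Sweedler components, and the outcome is $x*_{\a,\b\g}(y*_{\b,\g}z)$.

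For the antipode I would treat the two sides separately. The identity $x_{(1,\a)}*_{\a,\bar\a}T_\a(x_{(2,\a)})=\v_\a(x)1$ is direct and needs no cocommutativity: writing the product out gives $x_{(1,\a)}\cdot(x_{(2,\a)}\tr_{\a,\bar\a}T_\a(x_{(3,\a)}))$, and substituting \eqref{post-rbb-2} turns the bracketed $\tr$-factor into $\theta_{\a,\bar\a,x_{(2,\a)}}\psi_{\a,\bar\a,x_{(3,\a)}}(S_\a(x_{(4,\a)}))$. The components $x_{(2,\a)},x_{(3,\a)}$ are adjacent, so the $\theta_{\a,\bar\a,x_{(1,\a)}}\psi_{\a,\bar\a,x_{(2,\a)}}=\v_\a(x)id$ half of \eqref{eq:P-con} collapses this to $\v_\a(x_{(2,\a)})S_\a(x_{(3,\a)})$, leaving $x_{(1,\a)}S_\a(x_{(2,\a)})=\v_\a(x)1$.

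The opposite identity $T_\a(x_{(1,\a)})*_{\bar\a,\a}x_{(2,\a)}=\v_\a(x)1$ is the crux, because expanding the product now requires $\Delta_{\bar\a}(T_\a(x))$, and this is exactly where cocommutativity is needed. I would first establish the auxiliary fact that the convolution inverse is comultiplicative, namely
$$\Delta_\b(\psi_{\a,\b,x}(y))=\psi_{\a,\b,x_{(1,\a)}}(y_{(1,\b)})\o\psi_{\a,\b,x_{(2,\a)}}(y_{(2,\b)}),$$
which follows from the comultiplicativity of $\theta$ (a restatement of $\tr_{\a,\b}$ being a coalgebra homomorphism) together with uniqueness of convolution inverses and cocommutativity. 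Combining this with the anti-coalgebra law $\Delta_{\bar\a}S_\a=\sigma(S_\a\o S_\a)\Delta_\a$ and cocommutativity yields $\Delta_{\bar\a}(T_\a(x))=T_\a(x_{(1,\a)})\o T_\a(x_{(2,\a)})$, i.e. $T$ is comultiplicative. With $\Delta_{\bar\a}T_\a$ in hand, the left identity collapses through the $\psi_{\a,\bar\a,x_{(1,\a)}}\theta_{\a,\bar\a,x_{(2,\a)}}=\v_\a(x)id$ half of \eqref{eq:P-con} and the relation $S_\a(x_{(1,\a)})x_{(2,\a)}=\v_\a(x)1$, in complete parallel to the first side. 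I expect the comultiplicativity of $\psi$, and the Sweedler bookkeeping it demands, to be the main technical obstacle of the whole proof.

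Finally, the module-bialgebra claim is a re-reading of the post-Hopf axioms at $\b=\g=e$, requiring no new computation once $H_\tr$ is in place. The modulelike-object condition $(x*_{\a,\b}y)\tr_{\a\b,e}m=x\tr_{\a,e}(y\tr_{\b,e}m)$ is precisely \eqref{eq:P2} with $\g=e$, while $1\tr_{e,e}m=m$ is \eqref{eq:P4}. Among the four bialgebra conditions, $x\tr_{\a,e}(mn)=(x_{(1,\a)}\tr_{\a,e}m)(x_{(2,\a)}\tr_{\a,e}n)$ is \eqref{eq:P1} at $\b=\g=e$, the condition $x\tr_{\a,e}1=\v_\a(x)1$ is \eqref{eq:P3}, and the remaining comultiplicativity and counit conditions are exactly the statement that each $\tr_{\a,e}$ is a coalgebra homomorphism, recalling that the counit of $H_\tr$ is the original $\v$. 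This completes the verification.
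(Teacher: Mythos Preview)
Your outline agrees with the paper everywhere except at the left antipode identity $T_\a(x_{(1,\a)})*_{\bar\a,\a}x_{(2,\a)}=\v_\a(x)1$, and there the claim that it ``collapses through the $\psi_{\a,\bar\a,x_{(1,\a)}}\theta_{\a,\bar\a,x_{(2,\a)}}=\v_\a(x)\,id$ half of \eqref{eq:P-con} in complete parallel to the first side'' is not correct. After expanding with the comultiplicativity of $T$ you get
\[
T_\a(x_{(1,\a)})\cdot\big(T_\a(x_{(2,\a)})\tr_{\bar\a,\a}x_{(3,\a)}\big)
=\psi_{\a,\bar\a,x_{(1,\a)}}(S_\a(x_{(2,\a)}))\cdot\theta_{\bar\a,\a,\,T_\a(x_{(3,\a)})}(x_{(4,\a)}),
\]
and the second factor involves $\theta_{\bar\a,\a}$ evaluated at $T_\a(x_{(3,\a)})\in H_{\bar\a}$, not $\theta_{\a,\bar\a}$ evaluated at an adjacent Sweedler component of $x$; there is simply no $\psi_{x_{(i)}}\theta_{x_{(i+1)}}$ pair to contract. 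A direct computation can be pushed through, but it needs several intermediate facts you do not mention: first $\theta_{\bar\a,\b,T_\a(x)}=\psi_{\a,\b,x}$ (which follows from the modulelike identity $\theta_{x*y}=\theta_x\circ\theta_y$ coming out of \eqref{eq:P2}, the already-proven right antipode identity, and uniqueness of the convolution inverse), then a multiplicativity law $\psi_{x_{(1)}}(y)\cdot\psi_{x_{(2)}}(z)=\psi_x(yz)$ dual to \eqref{eq:P1}, and finally $\psi_x(1)=\v_\a(x)1$.

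The paper sidesteps all of this with a short trick. Using only associativity of $*$, the comultiplicativity of $T$ that you did establish, and the right antipode identity, it first shows
\[
T_{\bar\a}T_\a(x)=(x_{(1,\a)}*T_\a(x_{(2,\a)}))*T_{\bar\a}T_\a(x_{(3,\a)})
=x_{(1,\a)}*\big(T_\a(x_{(2,\a)})*T_{\bar\a}(T_\a(x_{(3,\a)}))\big)
=x_{(1,\a)}*\v_{\bar\a}(T_\a(x_{(2,\a)}))1=x,
\]
and then the left antipode identity is immediate:
\[
T_\a(x_{(1,\a)})*x_{(2,\a)}=T_\a(x_{(1,\a)})*T_{\bar\a}T_\a(x_{(2,\a)})=\v_{\bar\a}(T_\a(x))1=\v_\a(x)1.
\]
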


\begin{proof}
Since $\tr$ is a coalgebra homomorphism and $H$ is cocommutative, we have
\begin{eqnarray*}
\Delta_{\a,\b}(x *_{\a,\b} y)&=&\Delta_{\a,\b}(x_{(1,\a)}\cdot (x_{(2,\b)}\tr_{\a,\b} y))\\
&=&\Delta_\a(x_{(1,\a)})\cdot \Delta_{\a,\b}(x_{(2,\a)}\tr_{\a,\b} y)\\
&=&(x_{(1,\a)}\otimes x_{(2,\a)})\cdot((x_{(3,\a)}\tr_{\a,\b} y_{(1,\b)})\otimes(x_{(4,\a)}\tr_{\a,\b} y_{(2,\b)}))\\
&=&(x_{(1,\a)}\cdot (x_{(3,\a)}\tr_{\a,\b} y_{(1,\b)}))\otimes (x_{(2,\a)}\cdot(x_{(4,\a)}\tr_{\a,\b} y_{(2,\b)}))\\
&=&(x_{(1,\a)}\cdot (x_{(2,\a)}\tr_{\a,\b} y_{(1,\b)}))\otimes (x_{(3,\a)}\cdot(x_{(4,\a)}\tr_{\a,\b} y_{(2,\b)}))\\
&=&(x_{(1,\a)} *_{\a,\b} y_{(1,\b)})\otimes(x_{(2,\a)} *_{\a,\b} y_{(2,\b)})
\end{eqnarray*}
for all $x\in H_\a,y\in H_\b, \a,\b\in\pi$, which implies  that the comultiplication $\Delta$ is an algebra homomorphism with respect to the multiplication $*_{\a,\b}$. 
Moreover, we have
\begin{align*}
  \v_\b(x *_{\a,\b} y) & =\v_\b(x_{(1,\a)}\cdot (x_{(2,\a)}\tr_{\a,\b} y)) \\
  & =\v_\a(x_{(1,\a)})\v_\b(x_{(2,\a)}\tr_{\a,\b} y)=\v_\a(x)\v_\b(y),
\end{align*}
which implies  that the counit $\v$ is also an algebra homomorphism with respect to the multiplication $*_{\a,\b}$. 
Since the comultiplication $\Delta$ is an algebra homomorphism with respect to the multiplication $\cdot$, 
for all $x\in H_\a,y\in H_\b,z\in H_\g, \a,\b,\g\in\pi$, we have
\begin{eqnarray*}
(x *_{\a,\b} y)*_{\a\b,\g} z&=& (x_{(1,\a)} *_{\a,\b} y_{(1,\b)})\cdot((x_{(2,\a)} *_{\a,\b} y_{(2,\b)})\tr_{\a\b,\g} z)\\
&=&(x_{(1,\a)}\cdot (x_{(2,\a)}\tr_{\a,\b} y_{(1,\b)}))\cdot((x_{(3,\a)}\cdot (x_{(4,\a)}\tr_{\a,\b} y_{(2,\b)}))\tr_{\a\b,\g} z)\\
&\stackrel{\eqref{eq:P2}}{=}&x_{(1,\a)}\cdot (x_{(2,\a)}\tr_{\a,\b} y_{(1,\b)})\cdot (x_{(3,\a)}\tr_{\a,\g} (y_{(2,\b)}\tr_{\b,\g} z))\\
&\stackrel{\eqref{eq:P1}}{=}&x_{(1,\a)}\cdot (x_{(2,\a)}\tr_{\a,\b} (y_{(1,\b)}\cdot (y_{(2,\b)}\tr_{\b,\g} z)))\\
&=&x *_{\a,\b\g} (y*_{\b,\g} z),
\end{eqnarray*}
which implies that the multiplication $*_{\a,\b}$ is associative.  
For any $x\in H_\a, \a\in\pi$, by  \eqref{eq:P3} and \eqref{eq:P4}, we have
\begin{eqnarray*}
x *_{\a,e} 1&=&x_{(1,\a)}\cdot (x_{(2,\a)}\tr_{\a,\b} 1)=x_{(1,\a)}\cdot\v_\a(x_{(2,\a)})=x,\\
1 *_{e,\a} x&=&1\cdot (1\tr_{\a,\b} x)=x.
\end{eqnarray*}
Since $\tr$ is a coalgebra homomorphism and $H$ is cocommutative, we know that
$$\Delta_\b\psi_{\a,\b,x}=(\psi_{\a,\b,x_{(1,\a)}}\otimes\psi_{\a,\b,x_{(2,\a)}})\Delta_\b,$$
and $T$ is a coalgebra homomorphism. Also, note that
\begin{eqnarray*}
x_{(1,\a)}*_{\a,\bar{\a}} T_\a(x_{(2,\a)}) 
&\stackrel{\ref{post-rbb-1}}{=}& x_{(1,\a)}\cdot(x_{(2,\a)}\tr_{\a,\bar{\a}} T_\a(x_{(2,\a)}))\\
&\stackrel{\ref{post-rbb-2}}{=}& x_{(1,\a)}\cdot(\theta_{\a,\bar{\a},x_{(2,\a)}}(\psi_{\a,\bar{\a},x_{(3,\a)}}(S_\a(x_{(4,\a)}))))\\
&=& x_{(1,\a)}\cdot(\v_\a(x_{(2,\a)})S_\a(x_{(3,\a)}))\\
&=& \v_\a(x)1,
\end{eqnarray*}
and this implies that
\begin{eqnarray*}
T_{\bar{\a}}T_\a(x) &=& \v_\a(x_{(1,\a)})T_{\bar{\a}}T_\a(x_{(2,\a)}) \\
&=& (x_{(1,\a)}*_{\a,\bar{\a}} T_{\a}(x_{(2,\a)}))*_{e,\a} T_{\bar{\a}}T_\a(x_{(3,\a)})\\
&=& x_{(1,\a)}*_{\a,e} (T_{\a}(x_{(2,\a)})*_{\bar{\a},\a} T_{\bar{\a}}(T_\a(x_{(3,\a)})))\\
&=& x_{(1,\a)}*_{\a,e} \v_{\bar{\a}}(T_\a(x_{(2,\a)}))1 = x,
\end{eqnarray*}
and
\begin{eqnarray*}
T_\a(x_{(1,\a)}) *_{\bar{\a},\a} x_{(2,\a)} &=&  T_\a(x_{(1,\a)}) *_{\bar{\a},\a} T_{\bar{\a}}T_\a(x_{(2,\a)})\\
 &=& \v_{\bar{\a}}(T_\a(x))1 = \v_\a(x)1.
\end{eqnarray*}
Therefore, $(H,*,1,\Delta,\v,T)$ is a cocommutative Hopf $\pi$-algebra.

Moreover, we have
$$ (x *_{\a,\b} y)\tr_{\a\b,\g} z=(x_{(1,\a)}\cdot (x_{(2,\a)}\tr_{\a,\b} y))\tr_{\a\b,\g} z=x\tr_{\a,\g} (y\tr_{\b,\g} z). $$
Then by (\ref{eq:P1}) and (\ref{eq:P3}), $(H_e ,\cdot,1)$ is a left $\pi$-$H_\tr$-modulelike. 
Since $\tr_{(\a,\b)}$ is also a coalgebra homomorphism, $(H_e,\cdot,1,\Delta_e,\v_e,S_e)$ is a left $\pi$-$H_\tr$-modulelike $\pi$-bialgebra via the action $\tr_{\a,\b}$.
\end{proof}

\begin{corollary}
Let $(H,\tr)$ be a cocommutative post-Hopf $\pi$-algebra.
We can construct a Hopf $\pi$-brace $H_e\natural H_\tr=\{H_e\sharp H_\a\}_{\a\in\pi}$ via Theorem \ref{thm:subHopf} and Example \ref{ex:modlike}.
\end{corollary}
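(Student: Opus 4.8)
The plan is to realize this corollary as a direct instantiation of Example \ref{ex:modlike}, with the subadjacent Hopf $\pi$-algebra $H_\tr$ playing the role of the Hopf $\pi_1$-algebra $K$ and the unit-graded component $H_e$ playing the role of the modulelike Hopf $\pi_2$-algebra. Concretely, I would set $\pi_1=\pi$ and $\pi_2=\{e\}$, so that $\pi_1\oplus\pi_2\cong\pi$ and the family $\{H_\g\sharp K_\a\}_{(\g,\a)\in\pi_1\oplus\pi_2}$ collapses to $\{H_e\sharp H_\a\}_{\a\in\pi}$, which is precisely the object $H_e\natural H_\tr$ in the statement.

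First I would invoke Theorem \ref{thm:subHopf} to supply the two structural inputs demanded by Example \ref{ex:modlike}. The theorem guarantees, on the one hand, that $H_\tr=(H,*,1,\Delta,\v,T)$ is a cocommutative Hopf $\pi$-algebra, which is exactly the hypothesis that $K$ be a cocommutative Hopf $\pi_1$-algebra; on the other hand, its final clause asserts that $(H_e,\cdot,1,\Delta_e,\v_e,S_e)$ is a cocommutative left $\pi$-$H_\tr$-modulelike $\pi$-bialgebra under the action $\{\tr_{\a,e}:H_\a\o H_e\rightarrow H_e\}_{\a\in\pi}$. Reading $H_e$ as a Hopf $\pi_2$-algebra concentrated in the single grade $e$, this is exactly the hypothesis that the modulelike Hopf $\pi_2$-algebra be cocommutative, and it identifies the modulelike action $\rightharpoonup_\a$ of the example with $\tr_{\a,e}$.

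With both hypotheses in hand, I would apply Example \ref{ex:modlike} verbatim: it endows $H_e\natural H_\tr$ with a cocommutative Hopf $\pi$-brace structure whose two multiplications specialize to $(h\natural k)\cdot(h'\natural k')=(h\cdot h')\natural(k*k')$ and $(h\natural k)\circ(h'\natural k')=h\cdot(k_{(1,\a)}\tr_{\a,e}h')\natural k_{(2,\a)}*k'$, with the two antipodes inherited from $S_e$, $T$, and the action $\tr$ as recorded in the example.

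Since the full verification that these data obey the Hopf $\pi$-brace axioms is carried out once and for all inside Example \ref{ex:modlike}, nothing further need be checked in the corollary itself. The only point requiring attention is the bookkeeping of the two gradings: one must confirm that the trivial second grading $\pi_2=\{e\}$ leaves every structure map intact and that the modulelike action of the example is correctly matched with $\tr_{\a,e}$. I expect this indexing reconciliation to be the sole, and entirely routine, obstacle.
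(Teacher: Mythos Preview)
Your proposal is correct and matches the paper's intent exactly: the corollary is stated without proof in the paper, as it follows immediately by feeding the outputs of Theorem \ref{thm:subHopf} (namely the cocommutative Hopf $\pi$-algebra $H_\tr$ and the left $\pi$-$H_\tr$-modulelike bialgebra $H_e$) into Example \ref{ex:modlike} with $\pi_1=\pi$ and $\pi_2=\{e\}$. Your identification of the grading collapse $\pi\oplus\{e\}\cong\pi$ and of the modulelike action with $\tr_{\a,e}$ is precisely the bookkeeping the paper leaves implicit.
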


\begin{theorem}  \label{thm:br}
Let $(H,\tr)$ be a cocommutative post-Hopf $\pi$-algebra. Then $(H,\cdot,1,\Delta,\v,S)$  and  the subadjacent Hopf $\pi$-algebra $(H,*,1,\Delta,\v,T)$ form a Hopf $\pi$-brace. 
Conversely, any cocommutative Hopf $\pi$-brace $(H,\cdot,\circ)$ gives a post-Hopf $\pi$-algebra  $(H,\tr)$ with $\tr$ defined by 
$$x\tr_{\a,\b} y=S_\a(x_{(1,\a)})\cdot_{\bar{\a},\b}(x_{(2,\a)}\circ_{\a,\b} y), \quad \forall x\in H_\a,y\in H_\b, \a,\b\in\pi.$$
\end{theorem}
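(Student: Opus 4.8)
The plan is to prove the two directions of Theorem~\ref{thm:br} separately, leveraging the structural results already established. For the forward direction, I would show that the pair $(H,\cdot,*)$ satisfies the Hopf $\pi$-brace compatibility condition \eqref{eq:Hbrace}. By Theorem~\ref{thm:subHopf}, I already know that $(H,*,1,\Delta,\v,T)$ is a cocommutative Hopf $\pi$-algebra and that $(H_e,\cdot)$ carries a left $H_\tr$-modulelike $\pi$-bialgebra structure via $\tr$. The key observation is that the two multiplications are linked by \eqref{post-rbb-1}, namely $x *_{\a,\b} y = x_{(1,\a)}\cdot(x_{(2,\a)}\tr_{\a,\b} y)$. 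So the heart of the forward direction is to verify that
\begin{equation*}
g \circ (h\ell) = (g_{(1,\a)}*_{\a,\b}h)\, S_\a(g_{(2,\a)})\,(g_{(3,\a)}*_{\a,\g}\ell)
\end{equation*}
where I write $\circ := *$ for clarity. First I would expand the left-hand side $g*_{\a,\b\g}(h\ell)$ using \eqref{post-rbb-1}, then apply the post-Hopf axiom \eqref{eq:P1} (the ``weak left distributivity'' of $\tr$ over $\cdot$) to split $g_{(2,\a)}\tr_{\a,\b\g}(h\ell)$ into $(g_{(2,\a)}\tr_{\a,\b}h)(g_{(3,\a)}\tr_{\a,\g}\ell)$. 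Then I would expand the right-hand side and match terms, using cocommutativity to freely reorder the Sweedler legs of $g$ and the antipode identity $g_{(2,\a)}S_\a(g_{(3,\a)})=\v_\a(g_{(2,\a)})1$ to cancel the middle factor. This is essentially a bookkeeping computation that mirrors the standard post-Hopf-to-Hopf-brace argument, now indexed by $\pi$.

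For the converse direction, I would define $\tr$ by the stated formula $x\tr_{\a,\b}y = S_\a(x_{(1,\a)})\cdot(x_{(2,\a)}\circ y)$ and verify that $(H,\tr)$ is a post-Hopf $\pi$-algebra. Crucially, I note that this $\tr$ is exactly the action $\rightharpoonup$ appearing in Proposition~\ref{pro:pi-bialg}(1), so I can reuse the results proved there rather than recomputing from scratch. In particular, Proposition~\ref{pro:pi-bialg} already establishes that $g\rightharpoonup_\a(h\ell)=(g_{(1,\a)}\rightharpoonup_\a h)(g_{(2,\a)}\rightharpoonup_\a \ell)$ (this gives axiom \eqref{eq:P1}), that $g\rightharpoonup_\a(h\rightharpoonup_\b \ell)=(g\circ h)\rightharpoonup_{\a\b}\ell$ together with the relation $g\circ h = g_{(1,\a)}(g_{(2,\a)}\rightharpoonup_\a h)$ (which combine to give the post-Hopf axiom \eqref{eq:P2}, since $g_{(1,\a)}(g_{(2,\a)}\rightharpoonup h)$ is precisely $g\circ h$), and that $\tr_{\a,\b}$ is a coalgebra homomorphism. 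I would also verify \eqref{eq:P3} via $g\rightharpoonup_\a 1=\v_\a(g)1$, which is shown in the proof of Proposition~\ref{pro:pi-bialg}(1).

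The one genuinely new point in the converse is the convolution-invertibility of $\theta_{\a,\b}$ required by \eqref{eq:P-con}. Here I would exhibit the inverse explicitly: define $\psi_{\a,\b,x}(y) = T_\a(x_{(1,\a)})\circ$-action, or more precisely set $\psi_{\a,\b,x_{(2,\a)}}(y) = S_\a(T_\a(x)_{\dots})\dots$; concretely I expect $\psi_{\a,\b,x}(y)=T_\a(x_{(1,\a)})\rightharpoonup_{\bar\a} y$ adjusted by the antipode, so that $\psi$ and $\theta$ are mutually inverse under convolution. I would verify $\psi_{\a,\b,x_{(1,\a)}}\theta_{\a,\b,x_{(2,\a)}}=\v_\a(x)\,\mathrm{id}$ using the identity $g\circ T_\a(g')$-cancellation coming from the antipode $T$ of $H_\circ$, together with the relation $gh=g_{(1,\a)}\circ(T_\a(g_{(2,\a)})\rightharpoonup_{\bar\a}h)$ from Proposition~\ref{pro:pi-bialg}(2). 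I expect this invertibility check to be the main obstacle, since it is the only axiom not handed to me directly by the earlier propositions and it requires correctly identifying the inverse and threading the antipode $T$ of the circle-algebra through the convolution product; the remaining verifications reduce, via Proposition~\ref{pro:pi-bialg}, to already-established identities. Finally, I would remark that these two constructions are mutually inverse, establishing the claimed correspondence.
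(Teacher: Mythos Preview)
Your proposal is correct and follows essentially the same approach as the paper. The forward direction is exactly the paper's computation: expand $x*_{\a,\b\g}(y\cdot z)$ via \eqref{post-rbb-1}, apply \eqref{eq:P1}, insert $S_\a(x_{(3,\a)})x_{(4,\a)}$, and regroup. For the converse the paper simply writes ``straightforward but tedious to check'', and your plan of routing the verification through Proposition~\ref{pro:pi-bialg} is a natural way to carry that out; your hesitation about the convolution inverse is unnecessary, since $\psi_{\a,\b,x}(y)=T_\a(x)\rightharpoonup_{\bar\a} y$ (no extra Sweedler leg, no further antipode adjustment) already works by the modulelike identity $(x\circ T_\a(x'))\rightharpoonup$ collapsing to $\v_\a(x)\,\mathrm{id}$.
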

\begin{proof}
Let $(H,\tr)$ be a cocommutative post-Hopf $\pi$-algebra. We only need to show that the multiplications $\cdot$ and $*$ satisfy the compatibility  condition (\ref{eq:Hbrace}), which follows from
\begin{align*}
x*_{\a,\b}(y\cdot z)&=x_{(1,\a)}\cdot (x_{(2,\a)}\tr_{\a,\b\g}(y\cdot z))\\
&=x_{(1,\a)}\cdot((x_{(2,\a)}\tr_{\a,\b} y)\cdot (x_{(3,\a)}\tr_{\a,\g} z))\\
&=x_{(1,\a)}\cdot(x_{(2,\a)}\tr_{\a,\b} y)\cdot S_\a(x_{(3,\a)})\cdot x_{(4,\a)}\cdot (x_5\tr_{\a,\g} z)\\
&=(x_{(1,\a)}*_{\a,\b} y)\cdot S_\a(x_{(2,\a)})\cdot (x_{(3,\a)}*_{\a,\b} z),
\end{align*}
for any $x\in H_\a,y\in H_\b,z\in H_\g, \a,\b,\g\in\pi$.

Conversely, it is straightforward but tedious to check that a cocommutative Hopf $\pi$-brace $(H,\cdot,\circ)$ induces a post-Hopf $\pi$-algebra  $(H,\tr)$.
\end{proof}

\section{Rota-Baxter operators on cocommutative Hopf $\pi$-algebras}
 \def\theequation{4.\arabic{equation}}
 \setcounter{equation} {0}

\begin{definition}
Let $H=\{H_\a\}_{\a\in\pi}$ be a cocommutative Hopf $\pi$-algebra. The family of coalgebra homomorphisms $B=\{B_\a:H_\a\rightarrow H_{\bar{\a}}\}_{\a\in\pi}$ is called a Rota-Baxter operator on $H$ if for all $\a,\b\in\pi, a\in H_\a,b\in H_\b$,
\begin{equation} \label{eq:RBO}
  B_\a(a)B_\b(b)=B_{\b\a}\big(a_{(1,\a)}B_\a(a_{(2,\a)})bS_{\bar{\a}}(B_\a(a_{(3,\a)}))\big).
\end{equation}
The pair $(H,B)$ is called a Rota-Baxter Hopf $\pi$-algebra.
\end{definition}

\begin{remark}
\begin{enumerate}[\rm(1)]
  \item When the group $\pi=\{1\}$, we could recover the notion of Rota-Baxter Hopf algebras.
  \item It is obvious that the antipode $S$ is a Rota-Baxter operator on $H$.
  \item Let $(H,B)$ be a Rota-Baxter Hopf $\pi$-algebra, and $\varphi$ a $\pi$-bialgebra automorphism or antiautomorphism of $H$. Then $B^{(\varphi)}=\{B^{(\varphi)}_\a=\varphi_{\bar{\a}}\circ B_\a\circ\varphi^{-1}_\a\}_{\a\in\pi}$ is also a  Rota-Baxter operator on $H$.
\end{enumerate}
\end{remark}

\begin{example}
Let $(H,\Delta,\v,S,B)$ be a Rota-Baxter Hopf algebra and $\pi=Aut(H)$ the group of Hopf algebras automorphisms of $H$. For each $\a\in\pi$, $H_\a=H$ as a vector space. We denote the element in $H_\a$ by $h^\a=\a(h)$ for $h\in H$. Define
\begin{align*}
&m_{\a,\b}:H_\a\o H_\b\rightarrow H_{\a\b},\ g^\a\o h^\b\mapsto (gh)^{\a\b},\\
&\Delta_\a=\Delta,\quad \varepsilon_\a=\varepsilon,\\
&S_\a:H_\a\rightarrow H_{\bar{\a}},\ h^\a\mapsto S(h)^{\bar{\a}},\\
&B_\a:H_\a\rightarrow H_{\bar{\a}},\ h^\a\mapsto B(h)^{\bar{\a}},
\end{align*}
then $(\{H_\a\}_{\a\in \pi},\{B_\a\}_{\a\in \pi})$ is a Rota-Baxter Hopf $\pi$-algebra.
\end{example}

\begin{theorem}
Let $H=\{H_\a\}_{\a\in\pi}$ be a cocommutative Hopf $\pi$-algebra. Suppose that $G_\a$ is a Hopf subalgebra of $H_e$ and $K_\a$ is a subcoalgebra of $H_\a$ for all $\a\in\pi$, such that $G=\{G_\a\}_{\a\in\pi}$ is a Hopf $\pi$-algebra and $K=\{K_\a\}_{\a\in\pi}$ is a Hopf $\pi$-subalgebra of $H_\a$, and as a Hopf $\pi$-algebra $H=GK$, i.e., $H_\a=G_\a K_\a$ for all $\a\in\pi$. 
Suppose that the product is direct, that is, $H$ is isomorphic to $G\otimes K=\{G_\a \otimes K_\a\}_{\a\in\pi}$ as a family of vector spaces. Define a family of maps $B=\{B_\a:H_\a\rightarrow H_{\bar{\a}}\}_{\a\in\pi}$ by
$$B_\a (h h')=\v_e(h)S_\a(h'),$$
where $h_1\in G_\a, h_2\in K_\a$. 
Then $B$ is a Rota-Baxter operator of $H$.
\end{theorem}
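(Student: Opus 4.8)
The plan is to verify, in order, the three things required of a Rota-Baxter operator: that each $B_\a$ is well defined, that each $B_\a$ is a coalgebra homomorphism, and that the family $B$ satisfies \eqref{eq:RBO}.

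First, because $H$ is isomorphic to $G\o K$ as a family of vector spaces, every element of $H_\a$ is uniquely a linear combination of products $hh'$ with $h\in G_\a\subseteq H_e$ and $h'\in K_\a$; hence the prescription $B_\a(hh')=\v_e(h)S_\a(h')$ determines a well-defined linear map, whose image lies in $H_{\bar\a}$ since $S_\a(h')\in K_{\bar\a}$. To check that $B_\a$ is a coalgebra homomorphism I would use that $m_{e,\a}$ is a coalgebra map, so that $\Delta_\a(hh')=h_{(1,e)}h'_{(1,\a)}\o h_{(2,e)}h'_{(2,\a)}$, together with cocommutativity in the form $\Delta_{\bar\a}S_\a=(S_\a\o S_\a)\Delta_\a$ and the multiplicativity of $\v_e$; these give $\Delta_{\bar\a}B_\a=(B_\a\o B_\a)\Delta_\a$ directly, while $\v_{\bar\a}B_\a=\v_\a$ follows from $\v_{\bar\a}S_\a=\v_\a$.

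For \eqref{eq:RBO} I would evaluate both sides on $a=hk\in H_\a$ with $h\in G_\a,k\in K_\a$ and on an arbitrary $b\in H_\b$. Since $\Delta$ is multiplicative, $a_{(i,\a)}=h_{(i,e)}k_{(i,\a)}$, so applying the definition of $B$ and the cocommutativity identity $S_{\bar\a}S_\a=id$ turns $S_{\bar\a}(B_\a(a_{(3,\a)}))$ into $\v_e(h_{(3,e)})k_{(3,\a)}$; collapsing the resulting factors $h_{(1,e)}\v_e(h_{(2,e)})\v_e(h_{(3,e)})=h$ reduces the argument of $B_{\b\a}$ on the right-hand side to $h\cdot\big(k_{(1,\a)}S_\a(k_{(2,\a)})\,b\,k_{(3,\a)}\big)$. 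The key simplification is that coassociativity and the antipode axiom give $k_{(1,\a)}S_\a(k_{(2,\a)})\o k_{(3,\a)}=1_e\o k$ in $H_e\o H_\a$; since inserting $b$ between the two legs is just the application of the fixed linear map $u\o v\mapsto ubv$, this yields $k_{(1,\a)}S_\a(k_{(2,\a)})\,b\,k_{(3,\a)}=bk$, and the whole argument collapses to $hbk$.

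It then remains to compute $B_{\b\a}(hbk)$. Writing $b=h'k'$ with $h'\in G_\b,k'\in K_\b$, associativity gives $hbk=(hh')(k'k)$ with $hh'\in G_{\a\b}$ and $k'k\in K_{\a\b}$, so (using that $\pi$ is abelian, whence $\a\b=\b\a$, and that the graded products of the sub-$\pi$-algebras $G$ and $K$ close up) the element $hbk$ is already in $G_{\b\a}K_{\b\a}$-form. Applying $B_{\b\a}$ and using the multiplicativity of $\v_e$ together with the anti-multiplicativity $S_\a(k)S_\b(k')=S_{\b\a}(k'k)$ then gives $B_{\b\a}(hbk)=\v_e(h)\v_e(h')S_\a(k)S_\b(k')$, which is exactly $B_\a(a)B_\b(b)$. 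I expect the main obstacle to be the reduction carried out in the previous paragraph: one has to spot the collapse $k_{(1,\a)}S_\a(k_{(2,\a)})\o k_{(3,\a)}=1_e\o k$ and to notice that the intervening factor $b$ is harmless because it only amounts to applying a fixed linear map to an already-simplified tensor. It is worth stressing that no commutation relation between $G$ and $K$ is invoked anywhere — the factors produced by the reduction already occur in the order $G\cdot G\cdot K\cdot K$ — which is precisely why the bare factorization hypotheses (direct as coalgebras, with $G$ and $K$ sub-$\pi$-algebras) suffice.
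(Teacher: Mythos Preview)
Your proposal is correct and follows essentially the same route as the paper: both verify well-definedness, the coalgebra-map property, and then compute the right-hand side of \eqref{eq:RBO} on factorized elements, using $S_{\bar\a}S_\a=id$ and the antipode identity $k_{(1,\a)}S_\a(k_{(2,\a)})\o k_{(3,\a)}=1_e\o k$ to collapse the argument to $hh'k'k$, after which the anti-multiplicativity of $S$ and the multiplicativity of $\v_e$ finish the job. Your write-up is in fact slightly more explicit than the paper's at the delicate step (the ``insert $b$ via $u\o v\mapsto ubv$'' observation) and in flagging that no $G$--$K$ commutation is needed.
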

\begin{proof}
Clearly, $B$ is a family of well-defined linear maps. 
First we prove that each $B_\a$ is a coalgebra map. For $x=hg\in H_\a$, where $h\in G_\a$, $g\in K_\a$ we have:
\begin{eqnarray*}
\Delta_{\bar{\a}}(B_\a(x))&=&\v_e(h)\Delta_{\bar{\a}}(S_\a(g))=\v_e(h) S_{\bar{\a}}(g_{(2,\a)})\otimes S_{\bar{\a}}(g_{(1,\a)})\\
&=&\v_e(h_{(1,e)})S_{\bar{\a}}(g_{(1,\a)})\otimes \v_e(h_{(2,e)})S_{\bar{\a}}(g_{(2,\a)})\\
&=&(B_{\bar{\a}}\otimes B_{\bar{\a}})\Delta_\a(x).
\end{eqnarray*}

In order to prove that $B$ satisfies (\ref{eq:RBO}) consider $x=hg\in H_\a$  and $y=h'g'\in H_\b$, where $h\in G_\a, g\in K_\a$, $h'\in G_\b, g'\in K_\b$ for all $\a,\b\in\pi$. We have
\begin{eqnarray*}
 &&B_{\b\a}(x_{(1,\a)}B_\a(x_{(2,\a)})yS_{\bar{\a}}(B_\a(x_{(3,\a)})))\\
 &=&B_{\b\a}((h_{(1,e)}g_{(1,\a)})(\v_e(h_{(2,e)})S_\a (g_{(2,\a)}))(h'g')(\v_e(h_{(3,e)})S_{\bar{\a}}(S_\a(g_{(3,\a)}))))\\
 &=&B_{\b\a}(\v_e(h_{(1,e)})\v_e(h_{(2,e)})h_{(3,e)}g_{(1,\a)}S_\a(g_{(2,\a)}) h'g' g_{(3,\a)})\\
 &=&B_{\b\a}(hh' g' \v_e(g_{(1,\a)})g_{(2,\a)}) = B_{\b\a}(hh'g'g)\\
 &=&\v_{e}(hh')S_{\b\a}(g'g)=\v_e(h)\v_e(h')S_\a(g)S_\b(g')\\
 &=&B_\a(x)B_\b(y).
\end{eqnarray*}

Since $H_\a=G_\a K_\a$ is spanned by elements of the form $hg$, where $h\in G_\a$, $g\in K_\a$ for all $\a\in\pi$, the equation (\ref{eq:RBO}) holds for all $x,y\in H$.
\end{proof}

Let $\pi$ be an abelian group, then $H_{\a\b}=H_{\b\a}$ for a Hopf $\pi$-algebra $H$.
We now construct a Hopf $\pi$-brace via Rota-Baxter Hopf $\pi$-algebras by Theorem \ref{thm:hb}.

\begin{lemma}\label{lem:nH}
Let $(H=\{H_\a\}_{\a\in\pi},B)$ be a Rota-Baxter Hopf $\pi$-algebra. Define
\begin{eqnarray}
& m_{\a,\b}(g,h)=g\circ_B h:=g_{(1,\a)}B_\a(g_{(2,\a)})hS_{\bar{\a}}(B_\a(g_{(3,\a)})), & \label{eq:RBHm}\\
& T_\a(g):=S_{\bar{\a}}(B_\a(g_{(1,\a)}))S_\a(g_{(2,\a)})B_\a(g_{(3,\a)}), & \label{eq:RBHS}
\end{eqnarray}
for all $g\in H_\a,h\in H_\b, ~\a,\b\in\pi$.
\begin{enumerate}[\quad\rm(1)]
\item $H_B:=(H,m=\{m_{\a,\b}\}_{\a\in\pi},T=\{T_{\a,\b}\}_{\a\in\pi})$ is a cocommutative Hopf $\pi$-algebra. 
\item For all $h\in H_\a$, $\a\in\pi$, we have
\begin{eqnarray}
 & B_\a(h_{(1,\a)})B_{\bar{\a}}(T_\a(h_{(2,\a)}))=\v_\a(h)1_e, \label{eq:t} & \\
 & B_{\bar{\a}}T_\a = S_{\bar{\a}}B_\a. &
\end{eqnarray}

\item The operator $B$ is also a Rota-Baxter operator on $H_B$, that is, $(H_B,B)$ is a Rota-Baxter Hopf $\pi$-algebra.
\item The family of maps $B:H_B\rightarrow H$ is a homomorphism of Rota-Baxter Hopf $\pi$-algebras.
\end{enumerate}
\end{lemma}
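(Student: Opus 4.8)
The plan is to treat the four assertions in logical order: first erect the bialgebra and antipode structure of part (1), which supplies what is needed for the identities of part (2), and then read off parts (3) and (4) from the Rota-Baxter identity \eqref{eq:RBO} together with (2). Throughout I suppress the grading label on Sweedler legs, writing $g_{(1)}$ for $g_{(1,\a)}$, and I use repeatedly the reformulation of \eqref{eq:RBO} as the multiplicativity relation $B_\a(g)B_\b(h)=B_{\a\b}(g\circ_B h)$.

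For part (1), I would first note that $\circ_B$ and $T$ are coalgebra homomorphisms: since $H$ is cocommutative and $m$, $B$, $S$ are coalgebra maps, the composite expressions \eqref{eq:RBHm} and \eqref{eq:RBHS} are coalgebra maps, so $\Delta$ and $\v$ are algebra maps for $\circ_B$. The crucial step is associativity. I would expand $(g\circ_B h)\circ_B \ell$ by writing $\Delta(g\circ_B h)=(g_{(1)}\circ_B h_{(1)})\o(g_{(2)}\circ_B h_{(2)})$, replacing $B_{\a\b}(g_{(i)}\circ_B h_{(i)})$ by $B_\a(g_{(i)})B_\b(h_{(i)})$ via the multiplicativity relation, using that $S$ is an algebra anti-homomorphism (so $S_{\bar{\a}\bar{\b}}(B_\a(g)B_\b(h))=S_{\bar{\b}}(B_\b(h))S_{\bar{\a}}(B_\a(g))$), and finally cancelling an adjacent pair $S_{\bar{\a}}(B_\a(g_{(i)}))B_\a(g_{(i+1)})=\v_\a(g_{(i)})1_e$. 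Both $(g\circ_B h)\circ_B\ell$ and $g\circ_B(h\circ_B\ell)$ then collapse to the common normal form $g_{(1)}B_\a(g_{(2)})h_{(1)}B_\b(h_{(2)})\ell\, S_{\bar{\b}}(B_\b(h_{(3)}))S_{\bar{\a}}(B_\a(g_{(3)}))$. For the unit I would check $1_e\circ_B h=h=h\circ_B 1_e$ directly; this needs $B_e(1_e)=1_e$, which holds because $B_e(1_e)$ is group-like (image of $1_e$ under the coalgebra map $B_e$) and idempotent (by \eqref{eq:RBO} with $g=h=1_e$), and an idempotent group-like element, being invertible, must equal $1_e$.

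For the antipode and for part (2) I would proceed as follows. The relation $m_{\a,\bar{\a}}(\mathrm{id}\o T_\a)\Delta_\a(h)=h_{(1)}\circ_B T_\a(h_{(2)})=\v_\a(h)1_e$ is a self-contained telescoping computation from the definitions of $\circ_B$ and $T_\a$, using only the counit and antipode relations of $H$ and cocommutativity. This at once yields the first identity of (2): by \eqref{eq:RBO}, $B_\a(h_{(1)})B_{\bar{\a}}(T_\a(h_{(2)}))=B_e(h_{(1)}\circ_B T_\a(h_{(2)}))=\v_\a(h)B_e(1_e)=\v_\a(h)1_e$. Hence $B_{\bar{\a}}\circ T_\a$ is a right convolution inverse of $B_\a$ in $\Hom(H_\a,H_{\bar{\a}})$, whereas $S_{\bar{\a}}\circ B_\a$ is a two-sided convolution inverse (since $B_\a(h_{(1)})S_{\bar{\a}}(B_\a(h_{(2)}))=\v_\a(h)1_e$ and its left analogue both hold); because a right inverse must agree with a two-sided inverse, $B_{\bar{\a}}T_\a=S_{\bar{\a}}B_\a$, which is the second identity of (2). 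With this in hand, the remaining antipode axiom $T_\a(h_{(1)})\circ_B h_{(2)}=\v_\a(h)1_e$ telescopes similarly, now invoking $B_{\bar{\a}}T_\a=S_{\bar{\a}}B_\a$ and the cocommutativity identity $S_\a S_{\bar{\a}}=\mathrm{id}$. This finishes (1) and (2).

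For part (3) I would substitute the structure $(\circ_B,T)$ into the Rota-Baxter identity and verify it by a direct expansion, reducing each term via the multiplicativity relation $B_\a(g)B_\b(h)=B_{\a\b}(g\circ_B h)$, the identity $B_{\bar{\a}}T_\a=S_{\bar{\a}}B_\a$ of (2), and the $H$-level antipode cancellations. Part (4) is then a bookkeeping assembly: $B$ is multiplicative from $H_B$ to $H$ precisely by \eqref{eq:RBO}, it is a coalgebra map by hypothesis, it intertwines the antipodes by $B_{\bar{\a}}T_\a=S_{\bar{\a}}B_\a$, and it intertwines the two Rota-Baxter operators because both are the single family $B$. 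I expect the main obstacle to lie in the associativity of $\circ_B$ in (1) and in the expansion for (3): in both, the delicate point is to keep the non-commutative product factors in the correct order while using cocommutativity only to permute the comultiplication legs, and to make sure each cancellation $S_{\bar{\a}}(B_\a(g_{(i)}))B_\a(g_{(i+1)})=\v_\a(g_{(i)})1_e$ is applied to genuinely adjacent factors.
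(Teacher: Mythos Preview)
Your proposal is correct and follows essentially the same approach as the paper: direct expansion of $\circ_B$ using the Rota-Baxter identity rewritten as $B_\a(g)B_\b(h)=B_{\a\b}(g\circ_B h)$, telescoping antipode cancellations, and the convolution-inverse uniqueness argument for $B_{\bar{\a}}T_\a=S_{\bar{\a}}B_\a$. Your explicit verification that $B_e(1_e)=1_e$ and your use of part~(2) to close the left antipode axiom are minor clarifications over the paper's presentation, which simply asserts that the left axiom is proved ``similarly''.
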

\begin{proof}
(1) First, we prove $m=\{m_{\a,\b}\}_{\a,\b\in\pi}$ is  associative. For all $g\in H_\a, h\in H_\b, \ell\in H_\g, ~\a,\b,\g\in\pi$,
\begin{eqnarray*}
& & (g\circ_B h) \circ_B\ell \\
&=& (g_{(1,\a)}B_\a(g_{(2,\a)})hS_{\bar{\a}}(B_\a(g_{(3,\a)}))) \circ_B\ell \\
&=& g_{(1,\a)}B_\a(g_{(2,\a)})h_{(1,\b)}S_{\bar{\a}}(B_\a(g_{(3,\a)}))  B_{\b\a}\big( g_{(4,\a)} B_\a(g_{(5,\a)})h_{(2,\b)}S_{\bar{\a}}(B_\a(g_{(6,\a)})) \big) \\
&&\quad \ell S_{\bar{\b\a}}\big(B_{\b\a}( g_{(7,\a)} B_\a(g_{(8,\a)})h_{(3,\b)}S_{\bar{\a}}(B_\a(g_{(9,\a)})) )\big) \\
&=& g_{(1,\a)}B_\a(g_{(2,\a)})h_{(1,\b)}S_{\bar{\a}}(B_\a(g_{(3,\a)}))  B_\a(g_{(4,\a)})   B_\b(h_{(2,\b)})\ell  S_{\bar{\b\a}}( B_\a(g_{(3,\a)}) B_\b(h_{(3,\b)}) ) \\
&=& g_{(1,\a)}B_\a(g_{(2,\a)})h_{(1,\b)} B_\b(h_{(2,\b)})\ell S_{\bar{\b}}(B_\b(h_{(3,\b)})) S_{\bar{\a}}(B_\a(g_{(3,\a)})) \\
&=& g\circ_B( h_{(1,\b)} B_\b(h_{(2,\b)})\ell S_{\bar{\b}}(B_\b(h_{(3,\b)})) ) \\
&=& g\circ_B(h\circ_B \ell).
\end{eqnarray*}

  Then, we prove $m=\{m_{\a,\b}\}_{\a,\b\in\pi}$ is a $\pi$-coalgebra homomorphism. For all $g\in H_\a, h\in H_\b, ~\a,\b\in\pi$,
\begin{eqnarray*}
& & (g\circ_B h)_{(1,\a\b)} \otimes (g\circ_B h)_{(2,\a\b)} = (g\circ_B h)_{(1,\b\a)} \otimes (g\circ_B h)_{(2,\b\a)} \\
&=& (g_{(1,\a)}B_\a(g_{(2,\a)})hS_{\bar{\a}}(B_\a(g_{(3,\a)})))_{(1,\b\a)} \otimes (g_{(1,\a)}B_\a(g_{(2,\a)})hS_{\bar{\a}}(B_\a(g_{(3,\a)})))_{(2,\b\a)}  \\
&=& g_{(1,\a)}B_\a(g_{(2,\a)})h_{(1,\b)}S_{\bar{\a}}(B_\a(g_{(3,\a)})) \otimes g_{(4,\a)}B_\a(g_{(5,\a)})h_{(2,\b)}S_{\bar{\a}}(B_\a(g_{(6,\a)}))  \\
&=& (g_{(1,\a)}\circ_B h_{(1,\b)})\otimes(g_{(2,\a)}\circ_B h_{(2,\b)}).
\end{eqnarray*}

  Last, we prove $T=\{T_\a\}_{\a\in\pi}$ is an antipode. For all $h\in H_\a, ~\a\in\pi$,
\begin{eqnarray*}
& & m_{\a,\bar{\a}}(id_{H_{\a}} \otimes T_\a)\Delta_\a(h) \\
&=& h_{(1,\a)} \circ_B S_{\bar{\a}}(B_\a(h_{(2,\a)}))S_\a(h_{(3,\a)})B_\a(h_{(4,\a)}) \\
&=& h_{(1,\a)}B_\a(h_{(2,\a)}) S_{\bar{\a}}(B_\a(h_{(3,\a)}))S_\a(h_{(4,\a)})B_\a(h_{(5,\a)}) S_{\bar{\a}}(B_\a(h_{(6,\a)})) \\
&=& h_{(1,\a)} \v_\a(h_{(2,\a)}) S_\a(h_{(3,\a)}) \v_\a(h_{(4,\a)}) 1_e \\
&=& \v_\a(h_{(1,\a)}) \v_\a(h_{(2,\a)}) 1_e \\
&=& \v_\a(h)1_e.
\end{eqnarray*}

So $m_{\a,\bar{\a}}(id_{H_{\a}} \otimes T_\a)\Delta_\a = \v_\a 1_e$. Similarly prove $m_{\bar{\a},\a}(T_\a \otimes id_{H_{\a}})\Delta_\a = \v_\a 1_e$. 
Hence, $H$ with the new multiplication $m=\{m_{\a,\b}\}_{\a\in\pi}$ and antipode $T=\{T_{\a,\b}\}_{\a\in\pi})$  is a cocommutative Hopf $\pi$-algebra

(2) For all $h\in H_\a, ~\a\in\pi$, we have
\begin{eqnarray*}
& & B_\a(h_{(1,\a)})B_{\bar{\a}}(T_\a(h_{(2,\a)})) \\
&=& B_\a(h_{(4,\a)}) B_{\bar{\a}}(S_{\bar{\a}}(B_\a(h_{(2,\a)}))S_\a(h_{(3,\a)})B_\a(h_{(4,\a)}))  \\
&=& B_e(h_{(1,\a)}B_\a(h_{(2,\a)}) S_{\bar{\a}}(B_\a(h_{(3,\a)}))S_\a(h_{(4,\a)})B_\a(h_{(5,\a)}) S_{\bar{\a}}(B_\a(h_{(6,\a)}))) \\
&=& B_e(h_{(1,\a)} \v_\a(h_{(2,\a)}) S_\a(h_{(3,\a)}) \v_\a(h_{(4,\a)}) 1_e) \\
&=& \v_\a(h_{(1,\a)}) \v_\a(h_{(2,\a)}) B_e(1_e) \\
&=& \v_\a(h)1_e.
\end{eqnarray*}

From the above and $B_\a(h_{(1,\a)})S_{\bar{\a}}(B_\a(h_{(2,\a)}))=\v_\a(B_\a(h))=\v_\a(x)1_e$, we have $B_{\bar{\a}}T_\a$ and $S_{\bar{\a}}B_\a$ are the convolutional inverse for $B_\a$. Hence $B_{\bar{\a}}T_\a = S_{\bar{\a}}B_\a$, for all $\a\in\pi$.

(3) From Eq. (\ref{eq:RBO}) and (\ref{eq:RBHm}), we have 
\begin{eqnarray*}
 B_{\a\b}(g \circ_B h) &=& B_{\b\a}(g \circ_B h) \\
 &=& B_{\b\a}( g_{(1,\a)}B_\a(g_{(2,\a)})h S_{\bar{\a}}(B_\a(g_{(3,\a)})) ) \\
 &=& B_\a(g) B_\b(h),
\end{eqnarray*}
for all $g\in H_\a, h\in H_\b, ~\a,\b\in\pi$.

Then we have
\begin{eqnarray*}
& & B_{\b\a}\big(g_{(1,\a)}\circ_B B_\a(g_{(2,\a)})\circ_B h \circ_B T_{\bar{\a}}(B_\a(g_{(3,\a)}))\big) \\
&=& B_{\a}(g_{(1,\a)}) B_{\bar{\a}}(B_\a(g_{(2,\a)})) B_{\b}(h) B_{\a}(T_{\bar{\a}}(B_\a(g_{(3,\a)}))) \\
&=& B_{\a}(g_{(1,\a)}) B_{\bar{\a}}(B_\a(g_{(2,\a)})) B_{\b}(h) S_{\a}(B_{\bar{\a}}(B_\a(g_{(3,\a)})))  \\
&=& B_{\a}(g) \circ_B B_{\a}(h).
\end{eqnarray*}
for all $g\in H_\a, h\in H_\b, ~\a,\b\in\pi$. The operator $B$ is also a Rota-Baxter operator on the cocommutative Hopf $\pi$-algebra $H_B$.

(4) From (3), it is evident that the family of maps $B:H_B\rightarrow H$ is a homomorphism of Rota-Baxter Hopf $\pi$-algebras.
\end{proof}

\begin{theorem}  \label{thm:hb}
Let $(H=\{H_\a\}_{\a\in\pi},\cdot,B)$ be a Rota-Baxter Hopf $\pi$-algebra, and  the multiplication $\circ_B$ and antipode $T$ defined as (\ref{eq:RBHm}) and (\ref{eq:RBHS}). 
Then $(H,\cdot,\circ_B)$ is a cocommutative Hopf $\pi$-brace.
\end{theorem}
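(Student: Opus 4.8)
The plan is to verify the three defining axioms of a Hopf $\pi$-brace for the pair $(H,\cdot,\circ_B)$. By Lemma~\ref{lem:nH}(1), I already know that $(H,\cdot,1,\Delta,\v,S)$ is a cocommutative Hopf $\pi$-algebra (this is the original structure) and that $H_B=(H,\circ_B,1,\Delta,\v,T)$ is a cocommutative Hopf $\pi$-algebra. Thus conditions (1) and (2) of Definition~\ref{defi:pH}'s Hopf-brace analogue are handed to us for free, and the only thing left to establish is the compatibility condition~\eqref{eq:Hbrace}, namely
$$g\circ_B(h\ell)=(g_{(1,\a)}\circ_B h)S_\a(g_{(2,\a)})(g_{(3,\a)}\circ_B \ell)$$
for all $g\in H_\a$, $h\in H_\b$, $\ell\in H_\g$. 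A final routine check confirms $1_{\circ_B}=1$ (which also follows from Remark after the brace definition once compatibility holds).

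**Proving the compatibility condition.**
First I would expand the left-hand side by the definition~\eqref{eq:RBHm}:
$$g\circ_B(h\ell)=g_{(1,\a)}B_\a(g_{(2,\a)})\,(h\ell)\,S_{\bar{\a}}(B_\a(g_{(3,\a)})).$$
The key trick is to insert a resolution of the identity between $h$ and $\ell$ using the antipode axiom of the original Hopf $\pi$-algebra: since $\v_\a(g_{(3,\a)})1_e=S_{\bar{\a}}(B_\a(g_{(3,\a)}))B_\a(g_{(4,\a)})$ after suitably splitting the comultiplication of $g$ and using $B_\a(g_{(3,\a)})S_{\bar{\a}}(B_\a(g_{(4,\a)}))=\v_\a(g_{(3,\a)})1_e$. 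Concretely, I would insert the factor $S_{\bar{\a}}(B_\a(g_{(3,\a)}))B_\a(g_{(4,\a)})=\v(\cdots)1_e$ between $h$ and $\ell$, which (after relabeling the Sweedler indices and using cocommutativity to move the $g$-legs around) regroups the expression as
$$\big(g_{(1,\a)}B_\a(g_{(2,\a)})\,h\,S_{\bar{\a}}(B_\a(g_{(3,\a)}))\big)S_\a(g_{(4,\a)})\big(g_{(5,\a)}B_\a(g_{(6,\a)})\,\ell\,S_{\bar{\a}}(B_\a(g_{(7,\a)}))\big).$$
Recognizing the first and third parenthesized blocks as $g_{(1,\a)}\circ_B h$ and $g_{(?,\a)}\circ_B \ell$ respectively, and matching the middle factor $S_\a(g_{(?,\a)})$, yields exactly the right-hand side of~\eqref{eq:Hbrace}.

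**The main obstacle.**
The delicate point is the bookkeeping: one must insert the ``identity'' $S_{\bar{\a}}(B_\a(g_{(3,\a)}))B_\a(g_{(4,\a)})=\v_\a(g_{(3,\a)})1_e$ at the correct place so that, after cocommutativity lets me freely permute the comultiplication factors of $g$, the antipode $S_\a(g_{(i,\a)})$ of the \emph{original} Hopf $\pi$-algebra appears sandwiched between the two $\circ_B$-products, rather than the antipode $T$ of $H_B$. Here it is essential that~\eqref{eq:Hbrace} uses $S$ (not $T$), which is precisely why the insertion is built from $S$ and $B$ and not from $T$. The cocommutativity of $H$ is what makes this index-matching work, since it allows the higher Sweedler legs of $g$ to be rearranged so the telescoping $B_\a\cdot S_{\bar\a}B_\a$ cancellations align correctly. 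Apart from this careful index management, the computation is a direct unwinding of definitions; no separate lemma beyond Lemma~\ref{lem:nH} is needed.
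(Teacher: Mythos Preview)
Your proposal is correct and follows essentially the same argument as the paper: both reduce everything to Lemma~\ref{lem:nH} and then verify the compatibility condition~\eqref{eq:Hbrace} by a direct Sweedler computation. The only cosmetic difference is direction---the paper starts from the right-hand side $(g_{(1,\a)}\circ_B h)S_\a(g_{(2,\a)})(g_{(3,\a)}\circ_B\ell)$, expands both $\circ_B$'s, and collapses $S_\a(g_{(4,\a)})g_{(5,\a)}$ and then $S_{\bar\a}(B_\a(g_{(3,\a)}))B_\a(g_{(4,\a)})$ to reach $g\circ_B(h\ell)$---whereas you go the other way by inserting these same identities. One minor remark: the computation actually only uses coassociativity (and that $B_\a$ is a coalgebra map), not cocommutativity, so your emphasis on cocommutativity being ``essential'' for the index-matching is slightly overstated, though harmless since cocommutativity is part of the hypotheses anyway.
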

\begin{proof}
By Lemma \ref{lem:nH}, $(H,\circ_B,T)$ is a cocommutative Hopf $\pi$-algebra. For all $g\in H_\a, h\in H_\b, \ell\in H_\g, ~\a,\b,\g\in\pi$, we have
\begin{eqnarray*}
& & (g_{(1,\a)}\circ_B h)S_{\bar{\a}}(g_{(2,\a)})(g_{(3,\a)}\circ_B \ell) \\
&=& g_{(1,\a)}B_\a(g_{(2,\a)})hS_{\bar{\a}}(B_\a(g_{(3,\a)})) S_{\bar{\a}}(g_{(4,\a)}) g_{(5,\a)}B_\a(g_{(6,\a)})\ell S_{\bar{\a}}(B_\a(g_{(7,\a)})) \\
&=& g_{(1,\a)}B_\a(g_{(2,\a)})hS_{\bar{\a}}(B_\a(g_{(3,\a)})) B_\a(g_{(4,\a)})\ell S_{\bar{\a}}(B_\a(g_{(5,\a)})) \\
&=& g_{(1,\a)}B_\a(g_{(2,\a)})h\ell S_{\bar{\a}}(B_\a(g_{(3,\a)})) \\
&=& g\circ_B(h\ell).
\end{eqnarray*}
Hence $(H,\cdot,\circ_B)$ is a cocommutative $\pi$-Hopf brace.
\end{proof}

\section*{Acknowledgement}

This work was supported by the Shandong Provincial Natural Science Foundation (No. ZR2022QA007) and the NSF of Jining University (Nos. 2021ZYRC05, 2018BSZX01).

\end{document}